%% file: ex_article.tex
\documentclass[hidelinks,onefignum,onetabnum]{siamart251216}

\input{ex_shared}

\ifpdf
\hypersetup{
  pdftitle={The Influence of the Cluster Point on Rounding Errors and Sensitivity in the Spectral Limited-Memory Preconditioner},
  pdfauthor={H Elzayyadi and J. M. Tabeart}
}
\fi

\begin{document}

\maketitle

\begin{abstract}
The spectral limited-memory preconditioner (sLMP) clusters leading eigenvalues of symmetric positive definite matrices to accelerate conjugate gradient (CG) convergence. In practice, the cluster point is often chosen to be unity. In some cases, however, this choice can fail to accelerate convergence relative to unpreconditioned CG, even when highly accurate spectral information is available. Alternative cluster points have been proposed based on exact-arithmetic convergence analysis, but such analysis does not explain this finite-precision behaviour. We study how the cluster point influences two sources of numerical error in sLMP-preconditioned CG. First, we analyse the propagation of floating-point rounding errors during application of the preconditioner and derive computable relative-error bounds. For the dominant subspace (spanned by the eigenvectors associated with the leading eigenvalues of the unpreconditioned system) and its orthogonal complement (spanned by the remaining eigenvectors), these bounds are minimized by a weighted median and a weighted arithmetic mean of the leading eigenvalues, respectively. Our analysis explains why small cluster points can strongly amplify errors in the dominant subspace. Second, we investigate sensitivity to perturbations in the dominant spectral information when constructing the preconditioner. The resulting perturbation bound is minimized by a weighted median of the perturbed dominant eigenvalues, with weights determined by the eigenvector perturbation magnitudes. Numerical experiments on synthetic problems illustrate the predicted rounding-error and sensitivity behaviour. Together, these results show that cluster-point selection in finite precision should account for exact-arithmetic convergence, rounding errors, and inaccuracies in the available spectral information.
\end{abstract}

\begin{keywords}
spectral limited-memory preconditioner, rounding errors, sensitivity analysis, spectral perturbations, conjugate gradient method
\end{keywords}

\begin{MSCcodes}
65F08, 65G50, 65F15
\end{MSCcodes}

\section{Introduction}
Large-scale SPD linear systems arise throughout scientific computing and are commonly solved using iterative methods~\cite{saad2003iterative}. One important application is weak-constraint four-dimensional variational data assimilation, where Gauss–Newton methods require the repeated solution of large-scale linear systems~\cite{freitag2020numerical,gratton2007approximate}. In practical settings, computational constraints may require the iterative solver to be terminated after a limited number of iterations, making rapid convergence in the early iterations particularly important~\cite{lawless2006inner, dauvzickaite2025introduction}. For problems of this type, the Conjugate Gradient (CG) method is one of the most widely used iterative solvers due to its favorable convergence properties in exact arithmetic~\cite{hestenes1952methods}. The convergence of CG is strongly influenced by the spectral distribution of the system matrix~\cite{carson2024towards}. Consequently, preconditioning techniques are commonly employed to improve the spectral properties of the system by clustering eigenvalues and reducing the effective condition number, thereby accelerating convergence~\cite{saad2003iterative}.

Among the many preconditioning techniques proposed for SPD systems, sLMP has proven effective in accelerating CG convergence, particularly in the context of variational data assimilation for numerical weather prediction~\cite{gratton2011class,tshimanga2008limited,dauvzickaite2021randomised}. The preconditioner is constructed using the $k$ dominant eigenpairs of the coefficient matrix, corresponding to its largest eigenvalues, and modifies the dominant part of the spectrum by clustering these eigenvalues around a prescribed cluster point $\theta$. The performance of the preconditioner therefore depends on the choice of $\theta$. In practice, $\theta=1$ is often adopted due to its simplicity and compatibility with existing preconditioning frameworks~\cite{gratton2011class,tshimanga2008limited}. More recently, Diouane et al.~\cite{diouane2024efficient} investigated the influence of the cluster point $\theta$ on the convergence of preconditioned CG and showed that the spectral distribution of the preconditioned system can be significantly improved by appropriate choice of $\theta$. Their analysis is conducted entirely in exact arithmetic and is motivated by spectral clustering considerations. In particular, they seek cluster points that improve convergence in the early iterations by modifying the spectrum of the preconditioned operator.

While this provides valuable insight into the convergence properties of sLMP, it does not fully explain an important practical observation: the standard choice $\theta=1$ may exhibit worse convergence than the unpreconditioned CG~\cite{diouane2024efficient}. As demonstrated in Subsection~\ref{Rounding error Analysis}, this deterioration may persist even when highly accurate spectral information is available. More generally, a choice of $\theta$ that is attractive from an exact-arithmetic convergence perspective may simultaneously lead to poorer numerical behaviour in practical computations. This raises a fundamental question: how should the cluster point be chosen when accounting for both rounding errors in the application of sLMP and inaccuracies in the spectral information used to construct it? More generally, it is well established that finite-precision arithmetic can alter the practical behaviour of Krylov methods, making exact-arithmetic analysis alone insufficient to predict numerical performance~\cite{meurant2006lanczos}.

In this work, we consider two distinct sources of error: floating-point rounding errors generated during the application of sLMP and inaccuracies in the spectral information used to construct the preconditioner. We analyse how these effects depend on the choice of the cluster point. We first study how the choice of $\theta$ influences the propagation of rounding errors during the application of sLMP. We then investigate the sensitivity of the preconditioner to inaccuracies in the spectral information used to construct it and their impact on the convergence of preconditioned CG. For each source of error, we derive bounds that quantify its dependence on the cluster point and identify the cluster point values that minimize the corresponding bounds. Together, these results complement existing exact-arithmetic convergence analyses by accounting for rounding errors in the application of sLMP and sensitivity to inaccuracies in the spectral information.

The remainder of this paper is organized as follows. Section~\ref{sec: Background} reviews sLMP
and existing exact-arithmetic cluster-point choices. Section~\ref{sec: Analysis} presents the
rounding-error and sensitivity analyses, Section~\ref{sec: results} examines their implications
numerically, and we present our conclusions in Section~\ref{sec: conclusion}.
\section{Background}
\label{sec: Background}
Let $A \in \mathbb{R}^{n \times n}$ be SPD. The limited-memory preconditioner (LMP)~\cite{gratton2011class} is defined as
\begin{equation}
H_k = [I_n - S(S^TAS)^{-1}S^TA][I_n - AS(S^TAS)^{-1}S^T] + \theta S(S^TAS)^{-1}S^T,
\label{LMP}
\end{equation}
where $S \in \mathbb{R}^{n \times k}$ has full column rank, $k << n$, and $\theta > 0$ is a scalar parameter.

In the spectral variant, $S$ contains the eigenvectors associated with the $k$ largest eigenvalues of $A$. Let $Q_k = [v_1, \dots, v_k]$ be an orthonormal basis of these eigenvectors and let $D = \mathrm{diag}(\lambda_1, \dots, \lambda_k)$ where $\lambda_1 \geq \lambda_2 \geq \cdots \geq \lambda_k > 0$. Then, sLMP  can be written as
\begin{align*}
H_k &= I_n - Q_k Q_k^T + \theta Q_k D^{-1} Q_k^T = I_n - \sum_{i=1}^k \left(1 - \frac{\theta}{\lambda_i}\right) v_i v_i^T.
\end{align*}

The preconditioned operator $H_k A$ has eigenvalues 
$\underbrace{\theta, \dots, \theta}_{k \text{ times}}, \quad \lambda_{k+1}, \dots, \lambda_n,$
so that the dominant eigenvalues are clustered at $\theta$.

For implementation purposes, the preconditioner is often applied as a sequence of rank-one updates, leading to the equivalent factorized form
\begin{equation} \label{rank-one form}
H_k = \prod_{i=1}^k (I_n - \alpha_i v_i v_i^T), \quad \text{with} \quad \alpha_i = 1 - \frac{\theta}{\lambda_i}.
\end{equation}
This representation is preferred over the corresponding compact implementation because of its improved numerical stability~\cite{gratton2011class}. It also reveals that the action of the preconditioner depends explicitly on the coefficients $\alpha_i$, whose magnitude is governed by the choice of $\theta$ and directly influences the propagation of errors.

Diouane et al.~\cite{diouane2024efficient} studied the choice of $\theta$
from an exact-arithmetic convergence perspective. They showed that, for
$\theta \in [\lambda_{k+1},\lambda_k]$, at each iteration
\[
    \|x^*-\widehat{x}_{\ell}(\theta)\|_A
    \leq
    \|x^*-x_{\ell}\|_A,
\]
for any initial guess, where $x_\ell$ denotes the
$\ell$-th iterate generated by CG without preconditioning and
$\widehat{x}_{\ell}(\theta)$ denotes the $\ell$-th iterate generated by
preconditioned CG using sLMP with cluster point $\theta$, assuming that
$\widehat{x}_{0}(\theta)=x_0$. Moreover, within this interval, $\theta=\lambda_{k+1}$ is the optimal choice from an exact-arithmetic convergence perspective. They also proposed cluster points motivated by improving the early convergence of CG. In particular, they derived the residual-dependent choice
\begin{equation}
\label{eq:theta_r}
    \theta_r
    =
    \frac{\displaystyle\sum_{i=k+1}^{n}\lambda_i\eta_i^2}
         {\displaystyle\sum_{i=k+1}^{n}\eta_i^2},
    \qquad
    \eta_i=v_i^{\top}r_0,
    \qquad
    r_0=b-Ax_0
\end{equation}
which uniquely minimizes the energy norm of the error after the first preconditioned CG iteration. This choice satisfies $\lambda_n \leq \theta_r \leq \lambda_{k+1}$
and may be interpreted as the center of mass for the unmodified part of the spectrum, with weights determined by the components of the initial residual $r_0$.

A further choice is obtained by relating the scaled spectral preconditioner to deflated CG~\cite{diouane2024efficient}. The resulting upper bound relates the error of preconditioned CG to that of deflated CG, whose multiplicative factor is minimized at
 \begin{equation}
 \label{eq:theta_m}
    \theta_m
    =
    \frac{\lambda_{k+1}+\lambda_n}{2}.
\end{equation}
Thus, the choices $\lambda_k$, $\lambda_{k+1}$, $\theta_r$, and $\theta_m$ provide useful reference cluster points derived from different exact-arithmetic convergence considerations.

The above results are derived in exact arithmetic. In the following section, we investigate how finite-precision effects depend on the choice of the cluster point, considering both floating-point rounding errors in the application of sLMP and inaccuracies in the spectral information used to construct it.

\section{Rounding-Error and Sensitivity Analysis}
\label{sec: Analysis}
Let the vector $y \in \mathbb{R}^n$ be expressed as
\begin{equation}
\label{y_D+y_C}
    y = \sum_{i=1}^n \beta_i v_i
=
\underbrace{\sum_{i=1}^k \beta_i v_i}_{y_D}
+
\underbrace{\sum_{i=k+1}^n \beta_i v_i}_{y_C}
\end{equation}

(i.e., expanded in the eigenbasis of $A$), where $\{v_i\}_{i=1}^n$ are the orthonormal eigenvectors associated with the ordered eigenvalues $\{\lambda_i\}_{i=1}^n$. Here, we denote by $y_D$  the components in the \textit{dominant}  subspace, while $y_C$ denotes the components in the \textit{complement} subspace. Throughout this
section, $\|\cdot\|$ denotes the Euclidean norm $\|\cdot\|_2$.

The action of sLMP on $y$ in exact arithmetic is given by
\begin{align}\label{sLMP exact arithmetic}
    H_ky &=\sum_{i=1}^k \frac{\theta}{\lambda_i}\beta_i v_i
+
\sum_{i=k+1}^n \beta_i v_i.\\
&=H_ky_D+y_C \label{eq:exact sLMP action on y}
\end{align}

Thus, sLMP acts as a scaling operator on the \textit{dominant} subspace while leaving the \textit{complement} subspace unchanged. 
Therefore, in exact arithmetic, the cluster point \(\theta\) directly controls the action of sLMP on the \textit{dominant} subspace, while the complement subspace is left unchanged.

We consider two main sources of error affecting the practical behaviour of sLMP. The first arises from floating-point rounding errors
generated during the application of the rank-one updates defining the
preconditioner in~(\ref{rank-one form}). The second arises from inaccuracies in
the spectral information used to construct the preconditioner. We analyze
these effects separately in Subsections~\ref{Rounding error Analysis} and~\ref{Perturbation analysis}, respectively.

\begin{definition}
\label{Definition}
Following the standard floating-point error analysis of Higham 
\cite[Chapter~3]{doi:10.1137/1.9780898718027}, let
\[
    \gamma_n = \frac{nu}{1-nu},
\]
where $u$ denotes the unit roundoff and $nu<1$. Then, for any 
$a,b\in\mathbb{R}^n$, we use the standard floating-point model
\[
    \operatorname{fl}(a^T b)
    = a^T b + \delta,
    \qquad
    |\delta|
    \leq
    \gamma_n \|a\| \|b\|.
\]
\end{definition}
\subsection{Rounding-Error Analysis}
\label{Rounding error Analysis}
\begin{figure}[!htbp]
    \centering
    \includegraphics[width=0.6\linewidth]{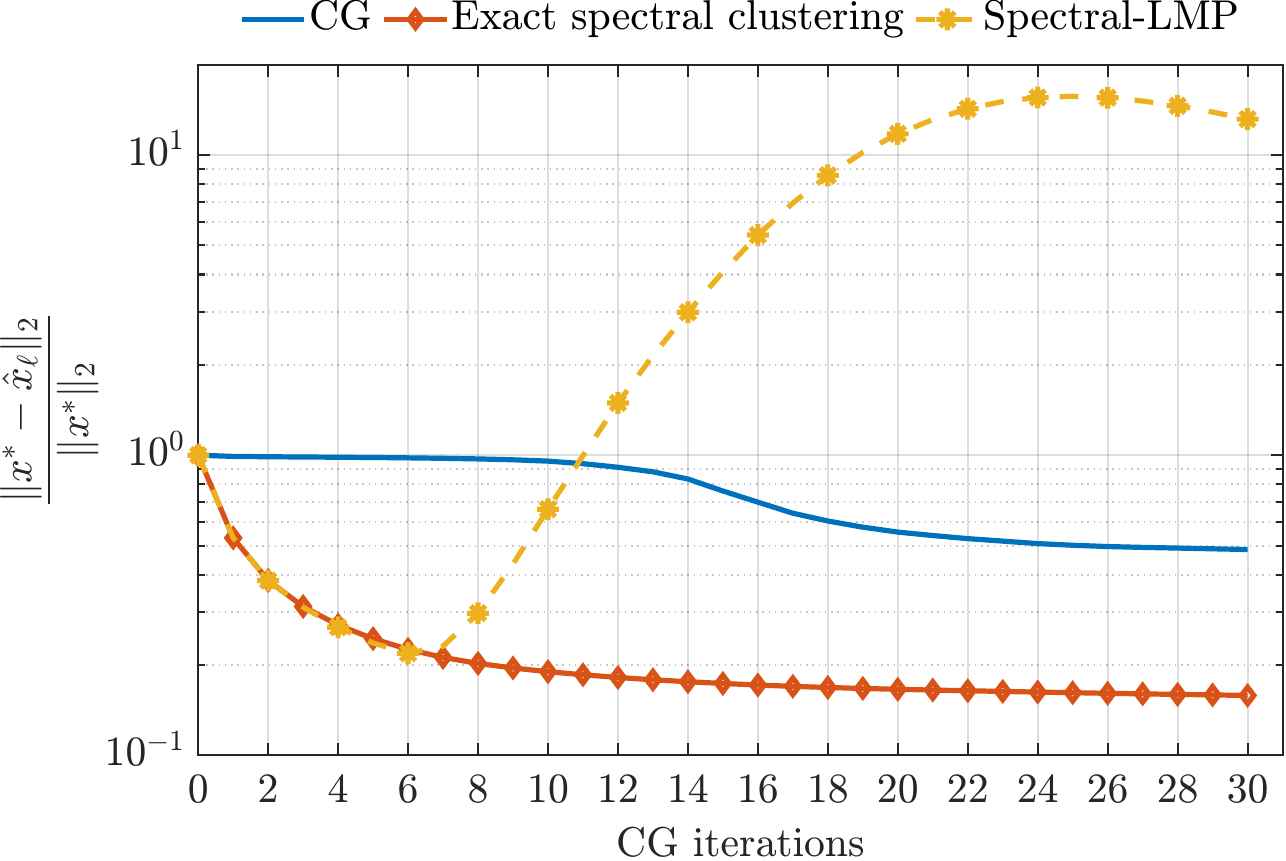}
    \caption{Relative forward-error convergence for unpreconditioned CG,
exact spectral clustering, and practical sLMP with $\theta=1$.}
    \label{fig:LMPvsExactLMP}
\end{figure}
Before analysing the propagation of rounding errors in  sLMP, we illustrate
their relevance through a numerical example. We consider an SPD matrix
$A=Q\Lambda Q^T$ of size $n=5000$, where $Q$ is obtained from the QR
factorization of a random Gaussian matrix and
$\Lambda=\operatorname{diag}(\lambda_1,\ldots,\lambda_n)$. We choose the first $k=100$
eigenvalues to decrease linearly from $10^8$ to $10^6$, and the remaining $n-k$ eigenvalues from $10^5$ to $1$. The exact solution $x^\ast$ is sampled from
$\mathcal{N}(0,I)$ and $b=Ax^\ast$. The first $k$ eigenpairs are taken directly
from this decomposition, in order to eliminate inaccuracies in the dominant spectral
information.

We compare unpreconditioned CG, preconditioned CG using the practical sLMP
with $\theta=1$, and CG applied to the exact spectral modification
$A_{\mathrm{ex}}=Q\Lambda_{\mathrm{ex}}Q^T$, where
\[
\Lambda_{\mathrm{ex}}
=
\operatorname{diag}
(\underbrace{1,\ldots,1}_{k\text{ times}},
\lambda_{k+1},\ldots,\lambda_n),
\qquad
b_{\mathrm{ex}}=A_{\mathrm{ex}}x^\ast.
\]
Thus, $A_{\mathrm{ex}}$ produces exactly the eigenvalue clustering intended by
sLMP with $\theta=1$, without applying the rank-one updates
in~\eqref{rank-one form}, while preserving the exact solution $x^\ast$. Figure~\ref{fig:LMPvsExactLMP} shows markedly different convergence behaviour.
Exact spectral clustering improves convergence relative to unpreconditioned
CG. Although the practical sLMP and exact spectral modification have the same spectrum in exact arithmetic, the practical sLMP initially follows the exact modification, but after approximately eight iterations its relative forward error begins to increase substantially.
Since spectral inaccuracies have been excluded, this discrepancy motivates
the following analysis of rounding errors and their dependence on~$\theta$.

To analyse these rounding errors, we consider separately the action of the
sLMP on the dominant and complement components, $y_D$ and $y_C$, defined
in~\eqref{y_D+y_C}. 
\begin{lemma}\label{complement lemma}
Let ${v_1,\dots,v_k,\cdots,v_n}\in\mathbb{R}^n$ be an orthonormal set of vectors. Define $\alpha_i = 1 - \frac{\theta}{\lambda_i}$, and 
$H_ky_C =
\prod_{i=1}^k (I_n - \alpha_i v_i v_i^{T})y_C,
$ 
where $y_C \in \mathrm{span}\{v_{k+1},\dots,v_n\}$. Then, to first order in the unit roundoff $u$, the
rounding error satisfies
\begin{equation}
\label{eq:complement rounding error}
\operatorname{fl}(H_ky_C)=y_C \circ \left(\mathbf{1}+\sum_{i=1}^k \mu_i\right)-\sum_{i=1}^k \alpha_i \varepsilon_i v_i-\sum_{i=1}^k \alpha_i v_i^T \left(y_c \circ \sum_{j=1}^{i-1} \mu_j\right)v_i+\mathcal{O}(u^2),
\end{equation}
where $\circ$ denotes the Hadamard (elementwise) product, i.e.,
$(a\circ b)_\ell = a_\ell b_\ell$ for $\ell=1,\dots,n$, $\mu_i\in\mathbb{R}^n$ is the componentwise relative error associated with the $i$-th rank-one update, and
$\mathbf{1}=(1,\ldots,1)^{T}$.

The quantities $\varepsilon_i$ and $\mu_i$ satisfy
\begin{equation}\label{eq:FlopRelations}
\varepsilon_i=\operatorname{fl}(v_i^Ty_C),
\qquad
|\varepsilon_i| \le \gamma_n \|y_C\|,
 \qquad \|\mu_i\|_\infty \le u.    
\end{equation}
\end{lemma}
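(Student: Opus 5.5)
The plan is to argue by induction on the number of rank-one updates applied. Define the exact iterates $z_0 = y_C$ and $z_i = (I_n - \alpha_i v_i v_i^T) z_{i-1}$. In exact arithmetic, orthonormality gives $v_i^T y_C = 0$ for $i\le k$, so $z_i = y_C$ for every $i$. The computed iterates are $\hat z_0 = y_C$ and $\hat z_i = \operatorname{fl}\bigl(\hat z_{i-1} - \alpha_i \operatorname{fl}(v_i^T \hat z_{i-1}) v_i\bigr)$. I will model each step in two stages. First, the inner product is computed with error controlled by Definition~\ref{Definition}. Second, the axpy-type update $\hat z_{i-1} - \alpha_i s_i v_i$ is computed componentwise, and its rounding is absorbed into a Hadamard factor $(\mathbf{1}+\mu_i)$ with $\|\mu_i\|_\infty \le u$. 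Each component involves a multiplication and a subtraction, so strictly one gets a few units of $u$. To first order these can be collected into a single vector, or one assumes $\alpha_i v_i$ is formed exactly; I will state whichever convention the bound \eqref{eq:FlopRelations} requires.

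The inductive claim is
\[
\hat z_i = y_C\circ\Bigl(\mathbf{1}+\sum_{j\le i}\mu_j\Bigr) - \sum_{j\le i}\alpha_j\varepsilon_j v_j - \sum_{j\le i}\alpha_j v_j^T\Bigl(y_C\circ\sum_{l<j}\mu_l\Bigr)v_j + \mathcal{O}(u^2).
\]

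For the inductive step, I compute $v_i^T\hat z_{i-1}$ using the hypothesis. The term $v_i^T y_C$ vanishes. The terms $v_i^T v_j$ vanish for $j<i$ by orthonormality, and they are $\mathcal{O}(u)$ anyway. What remains is
\[
v_i^T\hat z_{i-1} = v_i^T\Bigl(y_C\circ\sum_{j<i}\mu_j\Bigr) + \mathcal{O}(u^2).
\]
The floating-point inner product adds its own error $\varepsilon_i$. Because the exact product is already $\mathcal{O}(u)$, $\varepsilon_i$ is essentially the rounding error of $\operatorname{fl}(v_i^T y_C)$. By Definition~\ref{Definition} with $\|v_i\|=1$ and $\|\hat z_{i-1}\| = \|y_C\| + \mathcal{O}(u)$, this gives $|\varepsilon_i| \le \gamma_n\|y_C\| + \mathcal{O}(u^2)$.

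Substituting into the update gives
\[
\hat z_{i-1} - \alpha_i\Bigl(\varepsilon_i + v_i^T\bigl(y_C\circ\textstyle\sum_{j<i}\mu_j\bigr)\Bigr)v_i.
\]
Multiplying by $(\mathbf{1}+\mu_i)$ then adds $\hat z_{i-1}\circ\mu_i = y_C\circ\mu_i + \mathcal{O}(u^2)$. The correction terms times $\mu_i$ are $\mathcal{O}(u^2)$ and are dropped. This reproduces the claimed form at step $i$, and taking $i=k$ yields \eqref{eq:complement rounding error}.

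The main obstacle is bookkeeping: deciding consistently which errors are first order and how the several elementary roundings per step are packed into one $\mu_i$ and one $\varepsilon_i$. I would handle this by defining $\varepsilon_i$ as the total computed value $\operatorname{fl}(v_i^T\hat z_{i-1})$ minus its propagated first-order part. The lemma's notation $\varepsilon_i = \operatorname{fl}(v_i^T y_C)$ reflects exactly this, since the exact value of $v_i^T y_C$ is zero. I would also note that $\alpha_i$ may be large when $\theta\ll\lambda_i$ or $\theta\gg\lambda_i$. The expansion is first order in $u$ with $\alpha_i$ treated as fixed, so the $\mathcal{O}(u^2)$ terms hide constants depending on $\max|\alpha_i|$.
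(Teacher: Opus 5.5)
Your proposal is correct and follows essentially the same induction as the paper: the hypothesis gives $\operatorname{fl}(v_i^T\hat z_{i-1})=\varepsilon_i+v_i^T\bigl(y_C\circ\sum_{j<i}\mu_j\bigr)+\mathcal{O}(u^2)$, and applying $(\mathbf{1}+\mu_i)$ then discarding products of first-order terms closes the step; you are also more explicit than the paper about why orthonormality removes the $v_i^Ty_C$ and $v_i^Tv_j$ contributions. Your concern about several roundings per component can be dropped: the subtracted term $\alpha_i\operatorname{fl}(v_i^T\hat z_{i-1})v_i$ is already $\mathcal{O}(u)$, so the roundings in forming it are second order and only the final subtraction's rounding survives, which gives $\|\mu_i\|_\infty\le u$ at first order without any extra convention.
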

\begin{proof}
\noindent We proceed by induction,\\ \textbf{Base case:} When \(k = 1\),
\begin{align*}
    \operatorname{fl}(H_1y_C)&=\operatorname{fl}((I-\alpha_1v_1v_1^T)y_C)\\
    &=(y_C-\alpha_1 \varepsilon_1 v_1) \circ (\mathbf{1}+\mu_1)+\mathcal{O}(u^2)
\end{align*}
Substituting $k=1$ in~\eqref{eq:complement rounding error} shows that the base case holds.

\noindent\textbf{Inductive step:} Assume~\eqref{eq:complement rounding error} holds for \(k = m\), then we show it also holds for \(k = m+1\): 
\begin{align*}
\operatorname{fl}(H_{m+1}y_C)
&= \operatorname{fl}
\left[
\operatorname{fl}(H_m y_C)
-
\alpha_{m+1}v_{m+1}
\operatorname{fl}
\left(
v_{m+1}^{T}\operatorname{fl}(H_m y_C)
\right)
\right].
\end{align*}

Using the induction hypothesis and \eqref{eq:FlopRelations} 
we obtain
\[
\operatorname{fl}\!\left(
v_{m+1}^T\operatorname{fl}(H_my_C)
\right)
=
\varepsilon_{m+1}
+
v_{m+1}^T
\left(
y_C\circ\sum_{j=1}^{m}\mu_j
\right)
+O(u^2).
\]
Hence,
\[
\begin{aligned}
\operatorname{fl}(H_{m+1}y_C)
={}&
\Bigg[
\operatorname{fl}(H_my_C)
-\alpha_{m+1}\varepsilon_{m+1}v_{m+1} \\
&\qquad
-\alpha_{m+1}
v_{m+1}^T
\left(
y_C\circ\sum_{j=1}^{m}\mu_j
\right)v_{m+1}
\Bigg]
\circ(1+\mu_{m+1})
+O(u^2).
\end{aligned}
\]

Substituting the induction hypothesis and neglecting products of first-order \\terms yields~\eqref{eq:complement rounding error} for $k=m+1$.
\end{proof}

Lemma~\ref{complement lemma} shows that the relative forward error depends on the particular 
realizations of the rounding errors $\varepsilon_i$ and $\mu_i$. 
Minimizing the relative forward error directly yields a 
cluster point that depends on these realizations, which are not known 
a priori. We now present a realization-independent choice of $\theta$ 
by deriving an upper bound on the relative forward error that holds for all 
admissible rounding errors.
\begin{theorem}
\label{complement theorem}
Under the assumptions of Lemma~\ref{complement lemma}, the relative forward error in
the application of  sLMP to
$y_C\in\operatorname{span}\{v_{k+1},\ldots,v_n\}$ satisfies, to first
order in the unit roundoff $u$,
$\frac{\|y_C-\operatorname{\operatorname{fl}}(H_ky_C)\|}{\|y_C\|}
\le
(\gamma_n+ku)
\sqrt{
\sum_{i=1}^k
\alpha_i^2 }
+ku+O(u^2).
$ 

The first-order upper bound is uniquely minimized at
\begin{equation}
\label{weighted mean}
\theta_{\mathrm{WAM}}
=
\frac{\displaystyle\sum_{i=1}^k 1/\lambda_i}
     {\displaystyle\sum_{i=1}^k 1/\lambda_i^{2}},
\end{equation}
which is the weighted arithmetic mean of the dominant eigenvalues
$\lambda_1,\ldots,\lambda_k$ with weights $w_i=\frac{1}{\lambda_i^2}$.
\end{theorem}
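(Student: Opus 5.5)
Starting from the first-order expansion of Lemma~\ref{complement lemma}, subtract $y_C$ and apply the triangle inequality to the three first-order terms:
\[
y_C-\operatorname{fl}(H_ky_C) = -\,y_C\circ\sum_{i=1}^k\mu_i + \sum_{i=1}^k\alpha_i\varepsilon_i v_i + \sum_{i=1}^k\alpha_i\, v_i^T\Bigl(y_C\circ\sum_{j=1}^{i-1}\mu_j\Bigr)v_i + \mathcal{O}(u^2).
\]

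\textbf{Hadamard term.} We have $\|y_C\circ a\|\le\|a\|_\infty\|y_C\|$. Together with $\|\mu_i\|_\infty\le u$ from \eqref{eq:FlopRelations}, this gives a contribution of at most $ku\|y_C\|$. This is the additive $ku$ in the bound.

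\textbf{Second and third terms together.} Both are linear combinations of the orthonormal vectors $v_1,\dots,v_k$. Combine them into $\sum_{i=1}^k\alpha_i c_i v_i$, where
\[
c_i=\varepsilon_i+v_i^T\Bigl(y_C\circ\sum_{j<i}\mu_j\Bigr).
\]
By orthonormality (Pythagoras), the norm of this sum is $\bigl(\sum_i\alpha_i^2c_i^2\bigr)^{1/2}$. It then suffices to bound each coefficient uniformly. We have $|\varepsilon_i|\le\gamma_n\|y_C\|$ by \eqref{eq:FlopRelations}. By Cauchy--Schwarz and the Hadamard estimate,
\[
\Bigl|v_i^T\bigl(y_C\circ\sum_{j<i}\mu_j\bigr)\Bigr|\le (i-1)u\|y_C\|\le ku\|y_C\|.
\]
Hence $|c_i|\le(\gamma_n+ku)\|y_C\|$, and the norm is at most $(\gamma_n+ku)\|y_C\|\sqrt{\sum_i\alpha_i^2}$. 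Dividing by $\|y_C\|$ yields the stated first-order bound.

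This combination step is the main obstacle. Bounding the two sums separately by the triangle inequality would produce $\gamma_n\sqrt{\sum\alpha_i^2}+ku\sum|\alpha_i|$, which is not of the stated form. One must group the two sums coefficientwise before using orthonormality, so that a single factor $\sqrt{\sum\alpha_i^2}$ appears.

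\textbf{Minimization.} Only $\sqrt{\sum\alpha_i^2}$ depends on $\theta$, and $\gamma_n+ku>0$, so it suffices to minimize
\[
f(\theta)=\sum_{i=1}^k\Bigl(1-\frac{\theta}{\lambda_i}\Bigr)^2 .
\]
This is a strictly convex quadratic in $\theta$, with leading coefficient $\sum_i\lambda_i^{-2}>0$. Setting
\[
f'(\theta)=-2\sum_i\lambda_i^{-1}\bigl(1-\theta/\lambda_i\bigr)=0
\]
gives the unique minimizer $\theta_{\mathrm{WAM}}=\sum_i\lambda_i^{-1}\big/\sum_i\lambda_i^{-2}$.

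\textbf{Weighted-mean interpretation.} Write $\lambda_i^{-1}=w_i\lambda_i$ with $w_i=\lambda_i^{-2}$. Then $\theta_{\mathrm{WAM}}=\sum_i w_i\lambda_i\big/\sum_i w_i$, which is the weighted arithmetic mean of $\lambda_1,\dots,\lambda_k$ with weights $w_i=1/\lambda_i^2$.
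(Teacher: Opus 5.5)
Your proof is correct and follows the paper's argument: the triangle inequality on the expansion from Lemma~\ref{complement lemma}, the Hadamard estimate for the $\mu_i$ term, orthonormality of the $v_i$ together with \eqref{eq:FlopRelations}, and then minimization of the least-squares function $f(\theta)$. Your claimed ``main obstacle'' is not actually one: the paper bounds the $\varepsilon_i$-sum and the $\mu_j$-sum separately, but it applies orthonormality to each of them (the third sum is itself a combination of the $v_i$ with coefficients $\alpha_i v_i^T(y_C\circ\sum_{j<i}\mu_j)$), so each already gives a factor $\sqrt{\sum_i\alpha_i^2}$, and your coefficientwise grouping is unnecessary, though harmless.
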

\begin{proof}
Using~\eqref{eq:exact sLMP action on y} and~\eqref{eq:complement rounding error} we obtain the bound
\begin{align*}
    \|y_C-\operatorname{fl}(H_ky_C)\| 
    & \leq \|y_C\| \left\|\sum_{i=1}^k \mu_i\right\|_\infty + \sqrt{\sum_{i=1}^k \alpha_i^2 \varepsilon_i^2} + \sqrt{\sum_{i=2}^k \alpha_i^2 \|y_C\|^2 \left\|\sum_{j=1}^{i-1} \mu_j\right\|_\infty^2}. 
\end{align*}
Applying \eqref{eq:FlopRelations} yields
\begin{align*}
    \frac{\|y_C-\operatorname{fl}(H_ky_C)\|}{\|y_C\|}
    & \leq (\gamma_n+ku)\sqrt{\sum_{i=1}^k \alpha_i^2} + ku +\mathcal{O}(u^2)
\end{align*}
Finally, recalling the definition of $\alpha_i$~\eqref{rank-one form}, the minimiser of the bound is simply the minimiser of the least-squares problem,
\begin{equation*}
    f(\theta)=\sum_{i=1}^k \left(1-\frac{\theta}{\lambda_i}\right)^2
\end{equation*}
since $k$, $u$, and $\gamma_n$ are independent of $\theta$. This least-squares problem has the minimum given by the theorem statement.
\end{proof}

Now we analyse the propagation of rounding errors when applying sLMP on $y_D$ by giving an upper bound for the computed action of $H_k$ on $y_D$ in finite precision.
\begin{theorem}
\label{Dominant theorem}
    Let $\{v_1,\dots,v_k\}\subset\mathbb{R}^n$, $k\leq n$, be an orthonormal set of vectors. 
Let $\beta_i\in\mathbb{R}$ and define $\alpha_i = 1 - \frac{\theta}{\lambda_i}$ where $\theta$ is the cluster point. Then
\[
 \frac{\|H_ky_D-\operatorname{fl}(H_ky_D)\|}{\|H_ky_D\|} \leq C_D(\theta) \left(ku + \gamma_n \sum_{i=1}^k |\alpha_i|\right) + \mathcal{O}(u^2),
\]
where
\[
 C_D(\theta)= \begin{cases}
      \frac{\theta}{\lambda_k}, & \theta \geq \lambda_1\\
      \frac{\lambda_1}{\lambda_k}, & \lambda_k \leq \theta < \lambda_1 \\
      \frac{\lambda_1}{\theta}, & 0<\theta < \lambda_k
    \end{cases}.
\]
\end{theorem}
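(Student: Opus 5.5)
We follow the route of Lemma~\ref{complement lemma} and Theorem~\ref{complement theorem}: first obtain a first-order expression for $\operatorname{fl}(H_ky_D)$, then bound it in norm, and finally normalise by $\|H_ky_D\|$.

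\textbf{Step 1: first-order expansion.} We prove by induction on $k$ an analogue of~\eqref{eq:complement rounding error} for $y_D=\sum_{i=1}^k\beta_iv_i$. Write $z_i$ for the exact partial product $\prod_{j\le i}(I_n-\alpha_jv_jv_j^T)y_D$. Then
\[
\operatorname{fl}(H_ky_D)=H_ky_D+\sum_{i=1}^k\Bigl(z_i\circ\mu_i-\alpha_i\varepsilon_i' v_i\Bigr)+\text{propagated terms}+\mathcal{O}(u^2).
\]
Here $\varepsilon_i'$ is the rounding error in the inner product $v_i^T\operatorname{fl}(z_{i-1})$, with $|\varepsilon_i'|\le\gamma_n\|z_{i-1}\|$ by Definition~\ref{Definition}, and $\|\mu_i\|_\infty\le u$. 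Each local error committed at step $i$ is subsequently multiplied by the remaining factors $(I_n-\alpha_jv_jv_j^T)$ for $j>i$. Orthonormality gives $\|I_n-\alpha_jv_jv_j^T\|=\max(1,|1-\alpha_j|)=\max(1,\theta/\lambda_j)$. We will either carry these factors explicitly or, as in the proof of Theorem~\ref{complement theorem}, drop the cross terms. They appear in the statement only through the prefactor $C_D$.

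\textbf{Step 2: norm bounds.} Because the eigencomponents are scaled one at a time, $\|z_{i}\|\le\max(1,\max_j\theta/\lambda_j)\,\|y_D\|$. Summing the contributions gives
\[
\|H_ky_D-\operatorname{fl}(H_ky_D)\|\le M(\theta)\Bigl(ku+\gamma_n\sum_{i=1}^k|\alpha_i|\Bigr)\|y_D\|+\mathcal{O}(u^2),\qquad M(\theta)=\max\Bigl(1,\tfrac{\theta}{\lambda_k}\Bigr).
\]
The $ku$ term comes from the $k$ Hadamard errors $\mu_i$, each bounded by $u\|z_i\|$. The $\gamma_n|\alpha_i|$ terms come from the inner products.

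\textbf{Step 3: normalisation and case split.} From~\eqref{sLMP exact arithmetic}, $\|H_ky_D\|\ge\min_i(\theta/\lambda_i)\|y_D\|=(\theta/\lambda_1)\|y_D\|$. Hence the relative error carries the factor $M(\theta)\lambda_1/\theta$, and we split into the three ranges of $\theta$:
\begin{itemize}
\item if $\theta\ge\lambda_1$, then $\theta/\lambda_k\cdot\lambda_1/\theta=\lambda_1/\lambda_k$;
\item if $\lambda_k\le\theta<\lambda_1$, the factor is again at most $\lambda_1/\lambda_k$;
\item if $\theta<\lambda_k$, the factor is $\lambda_1/\theta$.
\end{itemize}
This reproduces $C_D$ for the last two cases. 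For the first case the statement claims $\theta/\lambda_k$, which is smaller than $\lambda_1/\lambda_k$ there, so a sharper argument is needed.

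\textbf{Main obstacle: the regime $\theta\ge\lambda_1$.} When $\theta\ge\lambda_1$, all scaling factors $\theta/\lambda_i$ are at least $1$, so $\|H_ky_D\|\ge\|y_D\|$. The denominator should therefore be bounded below by $\|y_D\|$ rather than by $(\theta/\lambda_1)\|y_D\|$. The numerator must then be bounded by $\theta/\lambda_k$ times the base rounding terms, using the growth of the intermediate $z_i$. Making this precise requires tracking which factor dominates in each regime, and bounding the denominator and the numerator growth consistently rather than multiplying two worst cases. We expect to redo Step~2 separately in each regime, with the worst-case growth factor $\max_i\max(1,\theta/\lambda_i)$ divided by the minimal scaling $\min_i\max(\dots)$, possibly with the denominator bounded below by $\|y_D\|\min(1,\theta/\lambda_1)$ in place of $(\theta/\lambda_1)\|y_D\|$. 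This step is where the three-case form of $C_D$ must be justified carefully.
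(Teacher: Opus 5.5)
There is a genuine gap. It lies in Step~2, not in the regime you flag.

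\textbf{The lost factor in Step~2.} Your ingredients are: (a) local errors of size $u\|z_i\|+\gamma_n|\alpha_i|\|z_{i-1}\|$; (b) $\|z_i\|\le M(\theta)\|y_D\|$; and (c) propagation of the step-$i$ error through $\prod_{j>i}F_j$, with $F_j=I_n-\alpha_jv_jv_j^T$, whose norm can also be as large as $M(\theta)$. The propagation cannot be dropped. It is an exact linear map applied to an $\mathcal{O}(u)$ quantity, so it is first order. In Lemma~\ref{complement lemma} the analogous propagated $\mu_j$ terms are kept as the third sum; only products of rounding errors are discarded. Combining (a)--(c) therefore gives $M(\theta)^2$ in the numerator, not $M(\theta)$. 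Dividing by your bound $\|H_ky_D\|\ge(\theta/\lambda_1)\|y_D\|$ gives the relative factor $M(\theta)^2\lambda_1/\theta=\theta\lambda_1/\lambda_k^2$ for every $\theta\ge\lambda_k$. This exceeds $C_D(\theta)$ by the factor $\lambda_1/\lambda_k$ in the top regime and $\theta/\lambda_k$ in the middle regime. The loss comes from bounding the intermediate vectors and the final vector separately against $\|y_D\|$, which multiplies two worst cases that cannot occur together.

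\textbf{The missing idea.} Compare each intermediate vector directly with the final result in the eigenbasis. The vectors $y_D^{(i)}=\sum_{j\le i}\frac{\theta}{\lambda_j}\beta_jv_j+\sum_{j>i}\beta_jv_j$ and $H_ky_D=\sum_{j}\frac{\theta}{\lambda_j}\beta_jv_j$ agree in the components $j\le i$. In the components $j>i$ they differ by the factor $\lambda_j/\theta$. Hence $\|y_D^{(i)}\|\le\max\{1,\lambda_1/\theta\}\,\|H_ky_D\|$ for every $i$. Write the accumulated error as $\sum_i(\prod_{j>i}F_j)r_i+\mathcal{O}(u^2)$, with $\|r_i\|\le u\|y_D^{(i)}\|+\gamma_n|\alpha_i|\|y_D^{(i-1)}\|$ and $\|\prod_{j>i}F_j\|\le\max\{1,\theta/\lambda_k\}$. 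The resulting prefactor $\max\{1,\theta/\lambda_k\}\max\{1,\lambda_1/\theta\}$ is exactly $C_D(\theta)$ in all three regimes. This is the paper's argument.

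\textbf{The ``main obstacle'' is misdiagnosed.} For $\theta\ge\lambda_1$ we have $\theta/\lambda_k\ge\lambda_1/\lambda_k$, so the stated $C_D$ is weaker, not sharper, than your Step~3 output. Your proposed fix also goes the wrong way: there $\theta/\lambda_1\ge1$, so replacing the lower bound $(\theta/\lambda_1)\|y_D\|$ by $\|y_D\|$ would only weaken the estimate.
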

\begin{proof}
Let $F_i=I_n-\alpha_i v_i v_i^T$, and define
\begin{equation}
\label{eq: ith rank-one update in fl}
\widehat{y}_D^{(i)}
=
\operatorname{fl}\left(F_i\widehat{y}_D^{(i-1)}\right)
=
F_i\widehat{y}_D^{(i-1)}+r_i,
\qquad
\widehat{y}_D^{(0)}=y_D,
\end{equation}
where $r_i$ denotes the local rounding error introduced at the $i$th update.

We first bound $r_i$. By Definition~\ref{Definition},
\[
\operatorname{fl}\left(v_i^T\widehat{y}_D^{(i-1)}\right)
=
v_i^T\widehat{y}_D^{(i-1)}+\varepsilon_i,
\qquad
|\varepsilon_i|
\leq
\gamma_n\|\widehat{y}_D^{(i-1)}\|.
\]
Hence,
\[
\operatorname{fl}(F_i\widehat y_D^{(i-1)})
=
\left[
\widehat y_D^{(i-1)}
-\alpha_i
\left(v_i^T\widehat y_D^{(i-1)}+\varepsilon_i\right)v_i
\right]\circ(1+\mu_i)+O(u^2),
\]
where $\|\mu_i\|_\infty\leq u$. Since
$F_i\widehat y_D^{(i-1)}
=
\widehat y_D^{(i-1)}
-\alpha_i v_i^T\widehat y_D^{(i-1)}v_i$,
\[
r_i
=
F_i\widehat y_D^{(i-1)}\circ\mu_i
-\alpha_i\varepsilon_i v_i
+O(u^2).
\]
Using $\|a\circ b\|\leq\|a\|\|b\|_\infty$ gives
\begin{align}
\|r_i\|
&\leq
u\|F_i\widehat y_D^{(i-1)}\|
+
\gamma_n|\alpha_i|
\|\widehat y_D^{(i-1)}\|
+O(u^2).
\label{eq:ri-local}
\end{align}
Since
$\widehat y_D^{(i-1)}=y_D^{(i-1)}+O(u)$ and
$F_i\widehat y_D^{(i-1)}=y_D^{(i)}+O(u)$, and since $u$ and
$\gamma_n$ are already first-order quantities,
\begin{equation}
\|r_i\|
\leq
u\|y_D^{(i)}\|
+
\gamma_n|\alpha_i|
\|y_D^{(i-1)}\|
+O(u^2).
\label{eq:ri-bound}
\end{equation}

We now bound the accumulated rounding error
$e_k=\widehat y_D^{(k)}-H_ky_D$. Recursively expanding
\eqref{eq: ith rank-one update in fl} and using
$H_ky_D=\prod_{i=1}^kF_i y_D$ gives
\[
e_k
=
\sum_{i=1}^{k}
\left(\prod_{j=i+1}^{k}F_j\right)r_i
+O(u^2).
\]
Since $\prod_{j=i+1}^{k}F_j$ is SPD with eigenvalues
$\{\theta/\lambda_j,1\}$,
\[
\left\|\prod_{j=i+1}^{k}F_j\right\|
\leq
\max\left\{1,\frac{\theta}{\lambda_k}\right\}
=:M(\theta).
\]
Therefore, using \eqref{eq:ri-bound},
\begin{equation}
\frac{\|e_k\|}{\|H_k y_D\|}
\leq
M(\theta)
\frac{\sum_{i=1}^{k}
\left(
u\|y_D^{(i)}\|
+
\gamma_n|\alpha_i|\|y_D^{(i-1)}\|
\right)}
{\|H_k y_D\|}
+O(u^2).
\label{eq:error-before-C}
\end{equation}

It remains to bound the intermediate vectors. They satisfy
\[
y_D^{(i)}
=
\sum_{j=1}^{i}\frac{\theta}{\lambda_j}\beta_jv_j
+
\sum_{j=i+1}^{k}\beta_jv_j,
\qquad
H_k y_D
=
\sum_{j=1}^{k}\frac{\theta}{\lambda_j}\beta_jv_j.
\]
Thus,
\[
\|y_D^{(i)}\|^2
=
\sum_{j=1}^{i}\frac{\theta^2\beta_j^2}{\lambda_j^2}
+
\sum_{j=i+1}^{k}
\frac{\theta^2\beta_j^2}{\lambda_j^2}
\frac{\lambda_j^2}{\theta^2},
\qquad
\|H_k y_D\|^2
=
\sum_{j=1}^{k}\frac{\theta^2\beta_j^2}{\lambda_j^2}.
\]
If $\theta\geq\lambda_1$, then $\lambda_j/\theta\leq1$, whereas if
$0<\theta<\lambda_1$, then
$\lambda_j/\theta\leq\lambda_1/\theta$. Hence,
\[
\frac{\|y_D^{(i)}\|}{\|H_k y_D\|}
\leq
\max\left\{1,\frac{\lambda_1}{\theta}\right\}.
\]
The same argument applies to $y_D^{(i-1)}$, so
\[
\frac{\|y_D^{(i)}\|}{\|H_k y_D\|},
\quad
\frac{\|y_D^{(i-1)}\|}{\|H_k y_D\|}
\leq
C(\theta),
\qquad
C(\theta)
=
\max\left\{1,\frac{\lambda_1}{\theta}\right\}.
\]
Applying this bound to \eqref{eq:error-before-C} gives
\[
\frac{\|H_k y_D-\operatorname{fl}(H_k y_D)\|}
     {\|H_k y_D\|}
\leq
M(\theta)C(\theta)
\left(
ku+\gamma_n\sum_{i=1}^{k}|\alpha_i|
\right)
+O(u^2).
\]

Finally,
\[
M(\theta)C(\theta)
=
\max\left\{1,\frac{\theta}{\lambda_k}\right\}
\max\left\{1,\frac{\lambda_1}{\theta}\right\}
=
\begin{cases}
\dfrac{\theta}{\lambda_k},
& \theta\geq\lambda_1,\\[2mm]
\dfrac{\lambda_1}{\lambda_k},
& \lambda_k\leq\theta<\lambda_1,\\[2mm]
\dfrac{\lambda_1}{\theta},
& 0<\theta<\lambda_k.
\end{cases}
\]
Substitution yields the stated bound.
\end{proof}

\begin{corollary}
\label{cor:Dominant corollary}
The first-order bound in Theorem~\ref{Dominant theorem} is minimized by
taking $\theta$ to be a weighted median of
$\{\lambda_1,\ldots,\lambda_k\}$ with weights $w_i=1/\lambda_i$.
\end{corollary}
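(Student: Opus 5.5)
The plan is to write the first-order bound of Theorem~\ref{Dominant theorem} as a function of $\theta$ alone,
\[
g(\theta) = C_D(\theta)\,\bigl(ku + \gamma_n h(\theta)\bigr),
\qquad
h(\theta)=\sum_{i=1}^k \Bigl|1-\frac{\theta}{\lambda_i}\Bigr| = \sum_{i=1}^k \frac{1}{\lambda_i}\,|\lambda_i-\theta|.
\]
The second form of $h$ is the key observation. It shows that $h$ is exactly the weighted absolute-deviation objective with weights $w_i=1/\lambda_i$, and the classical minimizer of that objective is a weighted median. The whole proof then reduces to showing that the factor $C_D(\theta)$ does not move the minimizer away from the one for $h$. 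I will use the three regimes from Theorem~\ref{Dominant theorem}.

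\textbf{The two outer regimes.} I will first show that $g$ is monotone on each of them.
\begin{itemize}
\item For $\theta\ge\lambda_1$: every term of $h$ equals $\theta/\lambda_i-1$, so $h$ is nondecreasing. Also $C_D(\theta)=\theta/\lambda_k$ is increasing and positive. Hence $g$ is increasing there, and $g(\theta)\ge g(\lambda_1)$.
\item For $0<\theta<\lambda_k$: every term of $h$ equals $1-\theta/\lambda_i>0$, so $h$ is decreasing. Also $C_D(\theta)=\lambda_1/\theta$ is decreasing and positive. The product of two positive decreasing functions is decreasing, so $g(\theta)>g(\lambda_k)$.
\end{itemize}
At the endpoints, $C_D$ is continuous: both one-sided values equal $\lambda_1/\lambda_k$. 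So the infimum of $g$ over $(0,\infty)$ is attained on $[\lambda_k,\lambda_1]$.

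\textbf{The interior regime.} On $[\lambda_k,\lambda_1]$ the factor $C_D\equiv\lambda_1/\lambda_k$ is constant. Minimizing $g$ there is therefore equivalent to minimizing $h$. The function $h$ is convex and piecewise linear with breakpoints at the $\lambda_i$. Away from breakpoints its slope is
\[
h'(\theta)=\sum_{\lambda_i<\theta}\frac1{\lambda_i}-\sum_{\lambda_i>\theta}\frac1{\lambda_i}.
\]
The minimizers are the points where this (sub)gradient changes sign, i.e.\ where $0\in\partial h(\theta)$. These are exactly the $\theta$ at which the cumulative weight of eigenvalues on each side is at most half of $\sum_i 1/\lambda_i$, which is the definition of a weighted median with weights $1/\lambda_i$.

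Any weighted median of $\{\lambda_1,\dots,\lambda_k\}$ lies in $[\lambda_k,\lambda_1]$. So it is feasible for the interior problem, and combining with the monotonicity on the outer regimes shows it is a global minimizer of $g$.

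\textbf{Main obstacle.} The delicate step is the outer-regime comparison. Because $C_D$ is not constant, I must genuinely use that $C_D$ and $h$ move in the same direction on each outer piece, rather than simply minimizing $h$. I also need to handle non-uniqueness carefully: if the weights split exactly in half, the set of weighted medians is an interval between two consecutive $\lambda_i$. This is why the statement says ``a weighted median'' rather than claiming a unique minimizer, and I will phrase the conclusion as the minimizing set being the set of weighted medians.
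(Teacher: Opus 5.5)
Your proposal is correct and follows the paper's proof essentially step for step. The bound is monotone on $(0,\lambda_k)$ and on $(\lambda_1,\infty)$, which confines the minimizer to $[\lambda_k,\lambda_1]$; there $C_D$ is constant and the problem reduces to the weighted absolute-deviation objective $\sum_i \lambda_i^{-1}|\lambda_i-\theta|$. Two points the paper asserts without comment are justified in your version: the outer monotonicity, via the product of positive monotone factors, and the continuity of $C_D$ at $\lambda_k$ and $\lambda_1$.
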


\begin{proof}
For $0<\theta<\lambda_k$, the first-order bound in
Theorem~\ref{Dominant theorem} is strictly decreasing, while for
$\theta>\lambda_1$ it is strictly increasing. Hence, a global minimizer lies in
$[\lambda_k,\lambda_1]$. On this interval,
$C_D(\theta)=\lambda_1/\lambda_k$ is independent of $\theta$, so
minimizing the first-order bound is equivalent to minimizing
\[
\sum_{i=1}^k
\left|1-\frac{\theta}{\lambda_i}\right|
=
\sum_{i=1}^k
\frac{1}{\lambda_i}|\lambda_i-\theta|.
\]
This is a weighted absolute-deviation problem, whose minimizers are the
weighted medians of $\{\lambda_1,\ldots,\lambda_k\}$ with weights
$w_i=1/\lambda_i$.
\end{proof}

Theorem~\ref{Dominant theorem} shows that rounding-error amplification in the
dominant subspace depends strongly on the position of $\theta$ relative to the
dominant spectrum. For $\theta<\lambda_k$ and $\theta>\lambda_1$, the
amplification is governed by $\lambda_1/\theta$ and $\theta/\lambda_k$,
respectively, whereas for $\theta\in[\lambda_k,\lambda_1]$ it remains constant
at $\lambda_1/\lambda_k$. Within this interval, the dependence on \(\theta\) is therefore governed by the coefficients \(\alpha_i\), and Corollary~\ref{cor:Dominant corollary} shows that the resulting bound is minimized by the weighted-median choice $\theta_{WM,1/\lambda}$.

\subsection{Sensitivity Analysis}
\label{Perturbation analysis}
The analysis above assumes nearly exact spectral information. In practice,
the dominant eigenpairs used to construct sLMP are computed numerically and
may therefore be inaccurate. We now analyse the sensitivity of sLMP to perturbations in the dominant
eigenpairs.

\begin{proposition}\label{prop: perturbed preconditioner}
Let
\begin{equation} \label{eq:Hk}
H_k
=
I-\sum_{i=1}^k \alpha_i v_i v_i^T,
\qquad
\alpha_i=1-\frac{\theta}{\lambda_i},    
\end{equation}

 Suppose the computed eigenvectors and eigenvalues of $A$ are perturbed such that
\begin{equation}\label{eq:Perturbations}
    \widehat v_i=v_i+\Delta v_i,
\qquad
\widehat\lambda_i=\lambda_i+\Delta\lambda_i,
\end{equation}

with
\begin{equation}\label{eq:PerturbationNorms}
   \|v_i\|=\|\widehat v_i\|=1,
\qquad
\|\Delta v_i\|= \tau_i,
\qquad
\left|\frac{\Delta\lambda_i}{\lambda_i}\right|
= \zeta_i,
\quad i=1,\ldots,k. 
\end{equation}

We define the perturbed quantities
\begin{equation}\label{eq:Perturbed alpha H}
  \widehat\alpha_i
=
1-\frac{\theta}{\widehat\lambda_i},
\qquad
\widehat H_k
=
I-\sum_{i=1}^k \widehat\alpha_i \widehat v_i \widehat v_i^T.
\end{equation}

Then
$
\|\widehat H_k-H_k\|
\le
\sum_{i=1}^k
\left(
\frac{\theta\zeta_i}{\widehat\lambda_i}
+
|\widehat\alpha_i|
(2\tau_i+\tau_i^2)
\right).
$
\end{proposition}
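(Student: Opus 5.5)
The plan is to write the difference termwise and split each term into an eigenvalue part and an eigenvector part. We have
\[
\widehat H_k-H_k=\sum_{i=1}^k\left(\alpha_i v_iv_i^T-\widehat\alpha_i\widehat v_i\widehat v_i^T\right),
\]
and the triangle inequality reduces the claim to a per-index bound
\[
\|\alpha_i v_iv_i^T-\widehat\alpha_i\widehat v_i\widehat v_i^T\|\le \frac{\theta\zeta_i}{\widehat\lambda_i}+|\widehat\alpha_i|(2\tau_i+\tau_i^2).
\]
For each $i$ I add and subtract $\widehat\alpha_i v_iv_i^T$:
\[
\alpha_i v_iv_i^T-\widehat\alpha_i\widehat v_i\widehat v_i^T=(\alpha_i-\widehat\alpha_i)v_iv_i^T+\widehat\alpha_i\left(v_iv_i^T-\widehat v_i\widehat v_i^T\right).
\]
The choice of splitting point matters. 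Putting $\widehat\alpha_i$, not $\alpha_i$, in front of the projector difference is what makes the statement come out with $|\widehat\alpha_i|$.

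\textbf{Eigenvalue part.} Since $\|v_iv_i^T\|=1$, it suffices to compute
\[
\alpha_i-\widehat\alpha_i=\frac{\theta}{\widehat\lambda_i}-\frac{\theta}{\lambda_i}=\frac{\theta\,\Delta\lambda_i}{\lambda_i\widehat\lambda_i}.
\]
Its absolute value is $\theta\zeta_i/|\widehat\lambda_i|$, which equals $\theta\zeta_i/\widehat\lambda_i$ provided $\widehat\lambda_i>0$. That positivity is implicit in the perturbed preconditioner being well defined and SPD, so I will state it as an assumption.

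\textbf{Eigenvector part.} Substituting $\widehat v_i=v_i+\Delta v_i$ gives
\[
\widehat v_i\widehat v_i^T-v_iv_i^T=v_i\Delta v_i^T+\Delta v_i v_i^T+\Delta v_i\Delta v_i^T.
\]
Taking norms with $\|ab^T\|=\|a\|\|b\|$ yields at most $\tau_i+\tau_i+\tau_i^2$. Multiplying by $|\widehat\alpha_i|$ gives the second term.

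Summing over $i$ completes the proof. No step is a real obstacle. The only points needing care are the sign of $\widehat\lambda_i$ and choosing the add-and-subtract so that $\widehat\alpha_i$ appears.
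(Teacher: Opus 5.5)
Your proposal is correct and follows the paper's proof essentially verbatim. The paper likewise computes $|\widehat\alpha_i-\alpha_i|=\theta\zeta_i/\widehat\lambda_i$ and expands $\widehat v_i\widehat v_i^T=v_iv_i^T+v_i\Delta v_i^T+\Delta v_iv_i^T+\Delta v_i\Delta v_i^T$. It then applies the triangle inequality with $\widehat\alpha_i$ multiplying the eigenvector terms, and it also tacitly assumes $\widehat\lambda_i>0$; your small sign slip, $\theta/\widehat\lambda_i-\theta/\lambda_i=-\theta\Delta\lambda_i/(\lambda_i\widehat\lambda_i)$, is harmless once absolute values are taken.
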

\begin{proof}
From \eqref{eq:Perturbations} and \eqref{eq:PerturbationNorms}
we have
\[
|\widehat\alpha_i-\alpha_i|
=
\left|
1-\frac{\theta}{\widehat\lambda_i}
-
1+\frac{\theta}{\lambda_i}
\right|
=
\theta
\left|
\frac{\widehat\lambda_i-\lambda_i}
{\lambda_i\widehat\lambda_i}
\right|
=
\frac{\theta\zeta_i}{\widehat\lambda_i}.
\]

Using the definitions of $H_k$ and $\widehat H_k$, \eqref{eq:Hk} and \eqref{eq:Perturbed alpha H}
\begin{align*}
\|\widehat H_k-H_k\|
&=
\left\|
\sum_{i=1}^k
\left[
\alpha_i v_i v_i^T
-
\widehat\alpha_i
\left(
v_i v_i^T
+
v_i\Delta v_i^T
+
\Delta v_i v_i^T
+
\Delta v_i\Delta v_i^T
\right)
\right]
\right\| \\
&\le
\sum_{i=1}^k
\Big(
|\alpha_i-\widehat\alpha_i|
\|v_i\|\|v_i^T\|
\nonumber\\
&\qquad\qquad
+
|\widehat\alpha_i|
\big(
\|v_i\|\|\Delta v_i^T\|
+
\|\Delta v_i\|\|v_i^T\|
+
\|\Delta v_i\|\|\Delta v_i^T\|
\big)
\Big).
\end{align*}
Using \eqref{eq:PerturbationNorms} and \eqref{eq:Perturbed alpha H} we obtain the result in the theorem statement.
\end{proof}
Proposition~\ref{prop: perturbed preconditioner} bounds the difference between the perturbed sLMP operator and the operator constructed using the exact dominant eigenpairs. We next translate this operator bound into a relative application-error bound.

\begin{corollary}\label{cor:perturbed-relative}
Under the assumptions of Proposition~\ref{prop: perturbed preconditioner},
for any $y$ such that $H_ky\neq 0$,
\[
\frac{\|(\widehat H_k-H_k)y\|}{\|H_ky\|}
\le
C(\theta)\,
\sum_{i=1}^k \left( \cfrac{\theta \zeta_i}{\widehat \lambda_i}
+w_i \left|1-\cfrac{\theta}{\widehat \lambda_i}\right|\right),
\]
where $w_i=2\tau_i+\tau_i^2, i=1,\ldots,k,$ and
\begin{equation}
 \label{C(theta)}
C(\theta)
=
\begin{cases}
1, & \theta\ge \lambda_1,\\[1ex]
\dfrac{\lambda_1}{\theta}, & 0<\theta< \lambda_1.
\end{cases}
\end{equation}
\end{corollary}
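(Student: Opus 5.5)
The plan is to combine the operator-norm bound of Proposition~\ref{prop: perturbed preconditioner} with a lower bound on $\|H_k y\|$ relative to $\|y\|$. First I would write
\[
\frac{\|(\widehat H_k-H_k)y\|}{\|H_ky\|}
\le
\|\widehat H_k-H_k\|\,\frac{\|y\|}{\|H_ky\|}.
\]
The first factor is controlled directly by Proposition~\ref{prop: perturbed preconditioner}. After substituting $\widehat\alpha_i=1-\theta/\widehat\lambda_i$ and $w_i=2\tau_i+\tau_i^2$, that bound is exactly the sum $\sum_{i=1}^k\bigl(\theta\zeta_i/\widehat\lambda_i+w_i|1-\theta/\widehat\lambda_i|\bigr)$ in the statement. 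So it remains to show that $\|y\|/\|H_ky\|\le C(\theta)$.

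For the second factor I would use the exact action \eqref{sLMP exact arithmetic}. In the orthonormal eigenbasis, $H_k$ is diagonal, with eigenvalues $\theta/\lambda_i$ for $i\le k$ and $1$ for $i>k$. Since $H_k$ is SPD, $\|H_ky\|\ge\sigma_{\min}(H_k)\|y\|$, where
\[
\sigma_{\min}(H_k)=\min\Bigl\{1,\min_{1\le i\le k}\frac{\theta}{\lambda_i}\Bigr\}=\min\Bigl\{1,\frac{\theta}{\lambda_1}\Bigr\},
\]
because $\lambda_1$ is the largest eigenvalue. Hence $\|y\|/\|H_ky\|\le\max\{1,\lambda_1/\theta\}$.

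This gives the two cases. If $\theta\ge\lambda_1$, the maximum is $1$. If $0<\theta<\lambda_1$, it is $\lambda_1/\theta$. This matches \eqref{C(theta)}. One could also compute componentwise, writing $\|H_ky\|^2=\sum_{i\le k}(\theta/\lambda_i)^2\beta_i^2+\sum_{i>k}\beta_i^2$, in the same way as the bounds on the intermediate vectors in the proof of Theorem~\ref{Dominant theorem}.

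No step is really an obstacle. The only point needing care is that $C(\theta)$ must be defined through the \emph{exact} $\lambda_1$, not $\widehat\lambda_1$, since $H_k$ is built from exact eigenpairs. The hypothesis $H_ky\ne0$ is automatic for $y\ne0$ because $H_k$ is nonsingular, and it simply ensures the ratio is defined.
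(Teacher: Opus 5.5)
Your proposal is correct and follows the paper's proof. It bounds the ratio by $\|\widehat H_k-H_k\|\,\|y\|/\|H_ky\|$, invokes Proposition~\ref{prop: perturbed preconditioner}, and uses $\|y\|/\|H_ky\|\le C(\theta)$; the only difference is that you justify this last inequality through the smallest eigenvalue $\min\{1,\theta/\lambda_1\}$ of the SPD matrix $H_k$, a step the paper simply asserts.
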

\begin{proof}
Using the definition of the induced matrix norm,
and dividing by $\|H_ky\|$,
\[
\frac{\|(\widehat H_k-H_k)y\|}{\|H_ky\|}
\le
\|\widehat H_k-H_k\|
\frac{\|y\|}{\|H_ky\|}.
\]

Combining the inequality $\frac{\|y\|}{\|H_ky\|}
\le
C(\theta)$ with Proposition~\ref{prop: perturbed preconditioner}
yields
\[
\frac{\|(\widehat H_k-H_k)y\|}{\|H_ky\|}
\le
C(\theta)\,
\sum_{i=1}^k \left( \cfrac{\theta \zeta_i}{\widehat \lambda_i}+w_i \left|1-\cfrac{\theta}{\widehat \lambda_i}\right|\right).
\]
\end{proof}
The following theorem determines the choice of cluster point $\theta$ 
that minimizes the bound on the relative perturbation in the action of the preconditioner derived in Corollary~\ref{cor:perturbed-relative}.

\begin{theorem}
\label{theorem: weighted median}
Assume that $\lambda_1\ge\widehat\lambda_1\ge\widehat\lambda_2
\ge\cdots\ge\widehat\lambda_k>0$.
Then the perturbation bound in
Corollary~\ref{cor:perturbed-relative} is minimized by taking
$\theta^\star$ to be a weighted median of the perturbed dominant eigenvalues
$\{\widehat\lambda_1,\ldots,\widehat\lambda_k\}$.
Equivalently, one such minimizer is
$\theta^\star=\widehat\lambda_j$, where $j$ is the smallest index satisfying
\[
\sum_{i=1}^{j-1}w_i
\le
\frac{W}{2}
\le
\sum_{i=1}^{j}w_i,
\qquad
W=\sum_{i=1}^{k}w_i.
\]
\end{theorem}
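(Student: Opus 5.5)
The plan is to rewrite the bound of Corollary~\ref{cor:perturbed-relative} as an explicit function of $\theta$,
\[
g(\theta)=C(\theta)\sum_{i=1}^k\Bigl(\frac{\theta\zeta_i}{\widehat\lambda_i}+w_i\Bigl|1-\frac{\theta}{\widehat\lambda_i}\Bigr|\Bigr),
\]
and to treat the two branches of $C(\theta)$ in \eqref{C(theta)} separately. The key observation is that on the branch $0<\theta<\lambda_1$ the factor $C(\theta)=\lambda_1/\theta$ turns the problem into a weighted absolute-deviation problem in the variable $s=1/\theta$. Indeed, multiplying through gives
\[
g(\theta)=\lambda_1\sum_{i=1}^k\frac{\zeta_i}{\widehat\lambda_i}+\lambda_1\sum_{i=1}^k w_i\Bigl|\frac{1}{\theta}-\frac{1}{\widehat\lambda_i}\Bigr|.
\]
The first sum does not depend on $\theta$. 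The second is $\lambda_1\sum_i w_i|s-s_i|$ with $s_i=1/\widehat\lambda_i$.

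\textbf{Step 1: the branch $\theta\ge\lambda_1$.} Here $C(\theta)=1$. By the hypothesis $\widehat\lambda_i\le\lambda_1\le\theta$, every $|1-\theta/\widehat\lambda_i|$ equals $\theta/\widehat\lambda_i-1$. Therefore $g(\theta)=\theta\sum_i(\zeta_i+w_i)/\widehat\lambda_i-W$ is affine and nondecreasing in $\theta$, and its infimum on this branch is attained at $\theta=\lambda_1$. Since $\lambda_1/\theta\to1$ as $\theta\to\lambda_1^-$, the function $g$ is continuous at $\lambda_1$. Hence it suffices to minimize $g$ over $(0,\lambda_1]$, using the formula for the left branch, which extends continuously to $\theta=\lambda_1$.

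\textbf{Step 2: the branch $0<\theta\le\lambda_1$.} By the display above, minimizing $g$ here is equivalent to minimizing $\phi(s)=\sum_i w_i|s-s_i|$ over $s\ge1/\lambda_1$. The ordering $\widehat\lambda_1\ge\cdots\ge\widehat\lambda_k$ gives $s_1\le s_2\le\cdots\le s_k$, and all $s_i\ge1/\lambda_1$ by hypothesis. The function $\phi$ is convex and piecewise linear. On the interval $(s_{j-1},s_j)$ its slope is $\sum_{i<j}w_i-\sum_{i\ge j}w_i=2\sum_{i<j}w_i-W$.

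The standard weighted-median argument then applies. The slope is $\le0$ to the left of $s_j$ and $\ge0$ to the right of $s_j$ exactly when $\sum_{i=1}^{j-1}w_i\le W/2\le\sum_{i=1}^{j}w_i$. So $s^\star=s_j$ minimizes $\phi$, and since $s_j\ge1/\lambda_1$ it lies in the admissible range. Mapping back through $\theta=1/s$, which is monotone, gives $\theta^\star=\widehat\lambda_j$. Because the reciprocal map preserves the cumulative-weight characterization up to reversal of the order, $\theta^\star$ is a weighted median of $\{\widehat\lambda_i\}$ with weights $w_i$. Taking the smallest such $j$ yields the stated representative.

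\textbf{Main obstacle.} The delicate part is the bookkeeping in Step 2. One must check that the change of variables $s=1/\theta$ keeps the weights equal to $w_i$; a naive expansion $|1-\theta/\widehat\lambda_i|=|\widehat\lambda_i-\theta|/\widehat\lambda_i$ would instead suggest weights $w_i/\widehat\lambda_i$. The factor $\lambda_1/\theta$ coming from $C(\theta)$ is precisely what removes this extra scaling. One must also confirm that the index convention ($s_i$ increasing in $i$) matches the cumulative sums in the statement, and that the constraint $s\ge1/\lambda_1$ is never active.

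The degenerate case $W=0$ (exact eigenvectors) should also be noted. There $g$ is constant on $(0,\lambda_1]$, so every point of that interval, including $\widehat\lambda_1$, is a minimizer, and the statement still holds.
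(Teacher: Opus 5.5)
Your proposal is correct and follows the same route as the paper. The paper also splits on the two branches of $C(\theta)$ and shows the bound increases for $\theta>\lambda_1$. On $(0,\lambda_1]$ it likewise absorbs the factor $\lambda_1/\theta$ to obtain the weighted absolute-deviation problem $\sum_i w_i\,|1/\theta-1/\widehat\lambda_i|$, whose minimizer is the weighted median $\widehat\lambda_j$. Your explicit slope computation, the check that the constraint $1/\theta\ge 1/\lambda_1$ is inactive, and the degenerate case $W=0$ spell out details the paper leaves implicit.
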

\begin{proof} Let \[ f(\theta) = C(\theta) \sum_{i=1}^{k} \left( \frac{\theta\zeta_i}{\widehat\lambda_i} + w_i \left|1-\frac{\theta}{\widehat\lambda_i}\right| \right). \] We consider two cases for $\theta$. \paragraph{Case 1: For $\theta>\lambda_1$} Since $\theta>\lambda_1\ge\widehat\lambda_1 \ge\widehat\lambda_i, i=1,\ldots,k,$ we have $ \left|1-\frac{\theta}{\widehat\lambda_i}\right| = \frac{\theta}{\widehat\lambda_i}-1.$ Using $C(\theta)=1$, we obtain $ f(\theta) = \sum_{i=1}^{k} \left[ \frac{\theta\zeta_i}{\widehat\lambda_i} + w_i \left( \frac{\theta}{\widehat\lambda_i}-1 \right) \right]. $ Therefore,  $f'(\theta) = \sum_{i=1}^{k} \frac{\zeta_i+w_i}{\widehat\lambda_i} > 0.$ Hence $f(\theta)$ is increasing for $\theta>\lambda_1$, so no minimizer can occur strictly above $\lambda_1$. \paragraph{Case 2: For $0<\theta\le\lambda_1$}
we have \begin{align*} f(\theta) &= \frac{\lambda_1}{\theta} \sum_{i=1}^{k} \left( \frac{\theta\zeta_i}{\widehat\lambda_i} + w_i \left|1-\frac{\theta}{\widehat\lambda_i}\right| \right) \\ &= \lambda_1 \sum_{i=1}^{k} \frac{\zeta_i}{\widehat\lambda_i} + \lambda_1 \sum_{i=1}^{k} w_i \left| \frac{1}{\theta} - \frac{1}{\widehat\lambda_i} \right|. \end{align*} The first term is independent of $\theta$, and the factor $\lambda_1$ does not affect the minimizing value of $\theta$. Therefore, minimizing $f(\theta)$ is equivalent to minimizing $ g(\theta) = \sum_{i=1}^{k} w_i \left| \frac{1}{\theta} - \frac{1}{\widehat\lambda_i} \right|.$ This weighted absolute-deviation function is minimized by a weigh-ted  median of $\left\{ \widehat\lambda_1, \ldots, \widehat\lambda_k \right\}.$ Hence $\theta^\star=\widehat\lambda_j,$ where $j$ is the smallest index satisfying \[ \sum_{i=1}^{j-1}w_i \le \frac{W}{2} \le \sum_{i=1}^{j}w_i. \] Since $\widehat\lambda_j\le\lambda_1$ and $f(\theta)$ is increasing
for $\theta>\lambda_1$, $\theta^\star=\widehat\lambda_j$ is a global
minimizer of the perturbation bound.
\end{proof}
Theorem~\ref{theorem: weighted median} considers perturbations in both
the dominant eigenvectors and eigenvalues. We now consider the special
case in which the dominant eigenvalues are exact and only the eigenvectors
are perturbed. In this case, the weighted-median result reduces to the
following corollary.
\begin{corollary}
\label{cor:exact-eigenvalues}
Under the assumptions of Theorem~\ref{theorem: weighted median}, suppose
that the dominant eigenvalues are exact, so that
\[
\widehat\lambda_i=\lambda_i,
\qquad
\zeta_i=0,
\quad i=1,\ldots,k.
\]
Then the cluster point that minimizes the perturbation bound is the
weighted median of the dominant eigenvalues
$\{\lambda_1,\ldots,\lambda_k\}$. Equivalently,
$\theta^\star=\lambda_j,$
where $j$ is the smallest index satisfying
$
\sum_{i=1}^{j-1}w_i
\le
\frac{W}{2}
\le
\sum_{i=1}^{j}w_i,
\qquad
W=\sum_{i=1}^{k}w_i.
$
\end{corollary}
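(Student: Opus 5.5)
This is a direct specialization of Theorem~\ref{theorem: weighted median}, and I would prove it by checking that theorem's hypotheses and substituting $\widehat\lambda_i=\lambda_i$ and $\zeta_i=0$.

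\textbf{Hypotheses.} Theorem~\ref{theorem: weighted median} requires $\lambda_1\ge\widehat\lambda_1\ge\cdots\ge\widehat\lambda_k>0$. With $\widehat\lambda_i=\lambda_i$ this becomes $\lambda_1\ge\lambda_1\ge\lambda_2\ge\cdots\ge\lambda_k>0$. This holds by the ordering of the eigenvalues of the SPD matrix $A$ fixed in Section~\ref{sec: Background}. The perturbation bound of Corollary~\ref{cor:perturbed-relative} then reduces to
\[
f(\theta)=C(\theta)\sum_{i=1}^k w_i\left|1-\frac{\theta}{\lambda_i}\right|,
\]
because the terms $\theta\zeta_i/\widehat\lambda_i$ vanish.

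\textbf{Minimization.} I would rerun the two cases of the proof of Theorem~\ref{theorem: weighted median} in this setting.

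\emph{Case $\theta>\lambda_1$.} Here $C(\theta)=1$, and $f'(\theta)=\sum_i w_i/\lambda_i$. This is positive provided $W>0$, i.e.\ at least one $\tau_i>0$.

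\emph{Case $0<\theta\le\lambda_1$.} Here $f(\theta)=\lambda_1\sum_i w_i\,|1/\theta-1/\lambda_i|$, since the constant term $\lambda_1\sum_i\zeta_i/\widehat\lambda_i$ is now zero. Minimizing $f$ is therefore equivalent to minimizing $g(\theta)=\sum_i w_i|1/\theta-1/\lambda_i|$.

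Substituting $s=1/\theta$ turns $g$ into the weighted absolute deviation $\sum_i w_i|s-1/\lambda_i|$. This is minimized at a weighted median of $\{1/\lambda_i\}$ with weights $w_i$. Because $s\mapsto 1/s$ is monotone and preserves the weights, the minimizer corresponds to a weighted median of $\{\lambda_i\}$.

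\textbf{Identifying the index $j$.} The $\lambda_i$ are ordered decreasingly, so the $1/\lambda_i$ are ordered increasingly. The first index at which the cumulative weight reaches $W/2$ therefore gives $\theta^\star=\lambda_j$, with $j$ the smallest index such that $\sum_{i<j}w_i\le W/2\le\sum_{i\le j}w_i$. Since $\lambda_j\le\lambda_1$ and $f$ is increasing above $\lambda_1$, this $\theta^\star$ is a global minimizer.

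\textbf{Main obstacle.} The only delicate points are degenerate ones.
\begin{itemize}
\item If all $\tau_i=0$, then $W=0$ and $f\equiv0$, so every $\theta$ minimizes the bound. The statement should then be read as ``a weighted median'', or one assumes $W>0$.
\item Ties in the weighted median (equality at $W/2$) mean the minimizer need not be unique. The word ``the'' in the statement refers to the specific choice given by the smallest-index rule.
\end{itemize}
Beyond these, the result is immediate from Theorem~\ref{theorem: weighted median}.
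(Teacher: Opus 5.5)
Your proposal is correct and follows the paper's route: the paper proves this corollary by substituting $\widehat\lambda_i=\lambda_i$ and $\zeta_i=0$ into Theorem~\ref{theorem: weighted median}, and your re-running of the two cases (with the substitution $s=1/\theta$) simply unpacks that theorem's proof in the specialized setting. Your degeneracy remarks are valid refinements that the paper's one-line proof and its phrasing ``the weighted median'' pass over: when $W=0$ we get $f\equiv 0$ and Case~1 no longer gives strict increase, and ties at $W/2$ make the minimizer non-unique.
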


\begin{proof}
Setting $\widehat\lambda_i=\lambda_i$ and $\zeta_i=0$ in
Theorem~\ref{theorem: weighted median} gives the result directly.
\end{proof}

Theorem~\ref{theorem: weighted median} shows that, in the presence of perturbations in both the dominant eigenvectors and eigenvalues, the sensitivity-oriented cluster point, denoted by \(\theta_{\mathrm{WM},e}\), is a weighted median of the perturbed dominant eigenvalues, with weights determined by the eigenvector perturbation magnitudes. Corollary~\ref{cor:exact-eigenvalues} gives the corresponding
result for exact dominant eigenvalues. Thus, the weighted-median choice
incorporates information about the accuracy of the spectral information used
to construct the preconditioner.

Overall, the error analyses identify three error-informed cluster points:
the weigh-ted arithmetic mean $\theta_{\mathrm{WAM}}$ for complement-subspace
rounding errors, the weighted median $\theta_{WM,1/\lambda}$ for
dominant-subspace rounding errors, and the perturbation-weigh-ted median for
inaccurate spectral information. These choices complement those motivated by
exact-arithmetic convergence. The following section examines these predictions
numerically, considering both rounding errors and inaccurate spectral
information.
\section{Numerical Results}
\label{sec: results}
This section investigates the influence of finite-precision effects on the
practical performance of sLMP and the choice of cluster point. We first examine
rounding errors when nearly exact spectral information is used, and then
consider inaccuracies in the dominant spectral information. The latter are
studied using both controlled synthetic perturbations and approximate
eigenpairs from practical eigensolvers, with particular attention to
perturbations within the dominant subspace and leakage into its complement.
The experiments assess when cluster points motivated by exact-arithmetic
convergence theory~\cite{diouane2024efficient} remain effective in practice.

\subsection{Experimental setup}
All experiments are performed in MATLAB R2024b on a machine with a
2.40 GHz 13th Gen Intel Core i7-13700H processor (14 cores),
16 GB of RAM, and Windows 11 Enterprise.
The MATLAB code used to generate the numerical results in this paper
is available at
\url{https://github.com/hisham-elzayadi/Spectral-LMP}. We use synthetic symmetric positive definite matrices
of size $n=5000$, with the first $k=100$ eigenpairs regarded as dominant. We
construct
\[
A=Q\Lambda Q^T,
\]
where $Q=[v_1,\ldots,v_n]$ is obtained from the QR factorization of a random
Gaussian matrix and
$\Lambda=\operatorname{diag}(\lambda_1,\ldots,\lambda_n)$.
Unless otherwise stated, the dominant eigenvalues are linearly distributed over
$[10^{16},10^{14}]$ and the remaining eigenvalues decrease linearly from
$10^{12}$ to $1$. The exact solution is sampled as $x\sim\mathcal{N}(0,I)$,
with $b=Ax$. All convergence plots show the relative forward error over the
first 50 CG iterations.

For the synthetic experiments, the approximate dominant basis
$\widehat V_k=[\widehat v_1,\ldots,\widehat v_k]$ is constructed with prescribed
perturbation magnitudes
\[
\tau_i=\|\Delta v_i\|=\|\widehat v_i-v_i\|,
\qquad i=1,\ldots,k,
\]
and separately controlled perturbation directions, followed by
orthonormalization. For the practical experiments, the dominant eigenpairs are
approximated using the randomized Nystr\"om
algorithm~\cite{halko2011finding,dauvzickaite2021randomised} and
REVD-ritzit~\cite{dauvzickaite2021randomised}.

We characterize the perturbations using the heat maps
\[
|\widehat V_k^TQ_k|,
\qquad
|\widehat V_k^TQ_{k+1:n}|,
\]
which measure dominant-subspace mixing and complement-subspace leakage,
respectively, together with
\[
\delta_i=\|Q_k^T\Delta v_i\|,
\qquad
\eta_i=\|Q_{k+1:n}^T\Delta v_i\|.
\]
We also report $\|\widehat V_k^TQ_{k+1:n}\|$, the sine of the largest
principal angle between the exact and approximate dominant
subspaces~\cite{principle-angle}.

We compare unpreconditioned CG with the exact-arithmetic choices
$\lambda_k$, $\lambda_{k+1}$, $\theta_r$, and $\theta_m$, and the
error-informed choices $\theta_{\mathrm{WM},1/\lambda}$, and $\theta_{\mathrm{WM},e}$. While $\theta_{\mathrm{WAM}}$ is only used in complement subspace rounding-error experiment. 
Here $\theta_r$ and $\theta_m$ are defined in \eqref{eq:theta_r} and
\eqref{eq:theta_m}, respectively. For synthetic perturbations, these choices
are evaluated using the exact eigenvalues. For practical eigensolvers,
$\widehat\lambda_k$ replaces $\lambda_k$, while $\lambda_{k+1}$,
$\theta_r$, and $\theta_m$ are retained as exact reference values; the
error-informed choices use the corresponding approximate dominant
eigenvalues.
\subsection{sLMP Rounding Errors}

We numerically assess the rounding-error behaviour predicted by
Theorems~\ref{complement theorem} and~\ref{Dominant theorem}.
The dominant eigenvalues are logarithmically distributed over
$[10^{13},10^{16}]$. To examine cluster points below, within, and above
the dominant spectrum, we use $k$ logarithmically spaced values in
$[1,10^{12}]$, the $k$ dominant eigenvalues, and $k$ logarithmically
spaced values in $[10^{17},10^{19}]$, together with
$\theta_{\mathrm{WAM}}$ and $\theta_{\mathrm{WM},1/\lambda}$.
The dominant eigenvectors are available to machine precision, so that
the observed errors arise from rounding during the application of sLMP.

For the complement subspace, we generate a random
$y_C\in\operatorname{span}\{v_{k+1},\ldots,v_n\}$ with normally distributed
coordinates. Figure~\ref{fig:rounding_comp} compares the bound from
Theorem~\ref{complement theorem} with the observed relative error
\[
\frac{\|y_C-\operatorname{fl}(H_k y_C)\|}{\|y_C\|}.
\]
The bound reproduces the overall dependence on $\theta$ and is minimized
at $\theta_{\mathrm{WAM}}$, as predicted. The observed error remains close
to machine precision for small cluster points, including $\theta=1$,
and increases as $\theta$ moves above the dominant spectrum, although
its magnitude remains comparatively small.

\begin{figure}[htbp]
    \centering
    \begin{subfigure}[t]{0.49\textwidth}
        \centering
        \includegraphics[width=\textwidth]{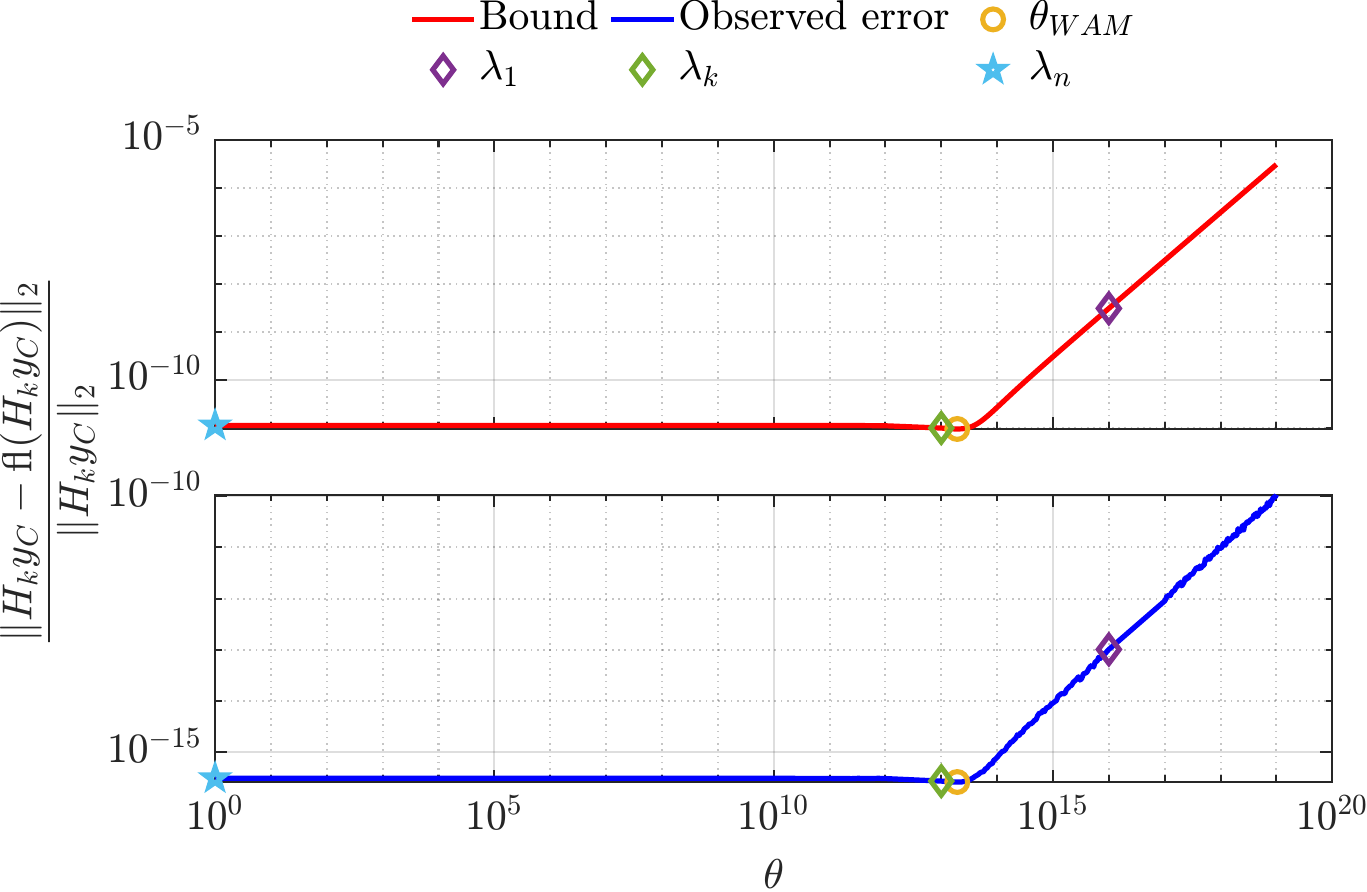}
        \caption{Complement subspace.}
        \label{fig:rounding_comp}
    \end{subfigure}
    \hfill
    \begin{subfigure}[t]{0.49\textwidth}
        \centering
        \includegraphics[width=\textwidth]{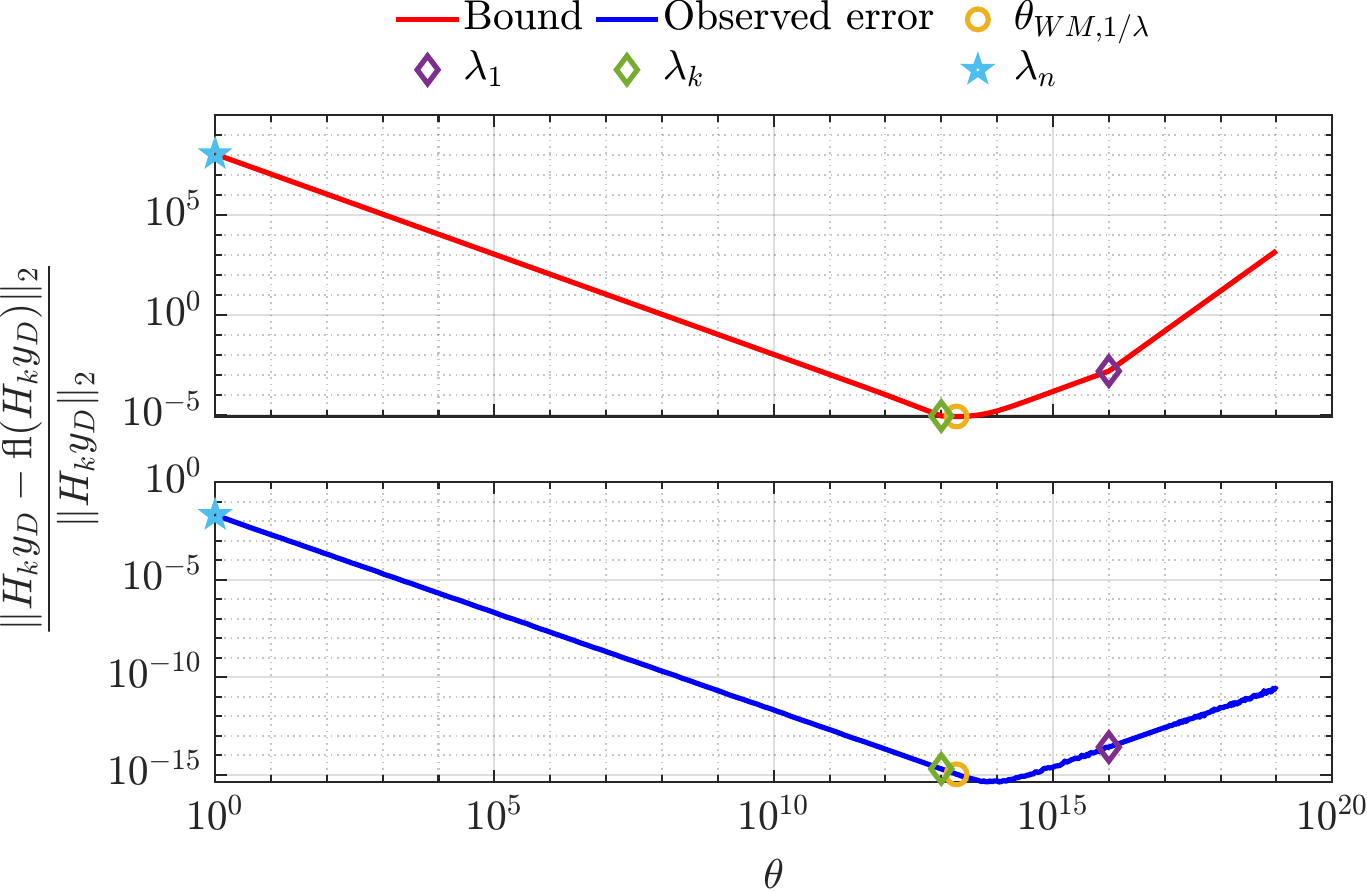}
        \caption{Dominant subspace.}
        \label{fig:rounding_dom}
    \end{subfigure}
    \caption{Theoretical rounding-error bounds and observed relative
    forward errors as functions of $\theta$. The complement-subspace
    bound is minimized at $\theta_{\mathrm{WAM}}$, while the
    dominant-subspace bound is minimized at
    $\theta_{\mathrm{WM},1/\lambda}$.}
    \label{fig:rounding_errors}
\end{figure}

For the dominant subspace, we similarly generate a random
$y_D\in\operatorname{span}\{v_1,..,v_k\}$.
Figure~\ref{fig:rounding_dom} compares the bound from
Theorem~\ref{Dominant theorem} with
\[
\frac{\|H_k y_D-\operatorname{fl}(H_k y_D)\|}
     {\|H_k y_D\|}.
\]
The bound is minimized at $\theta_{\mathrm{WM},1/\lambda}$, which lies
within the observed low-error region. The error remains close to machine
precision throughout much of $[\lambda_k,\lambda_1]$, but increases
rapidly for $\theta<\lambda_k$. In particular, near $\theta=\lambda_n=1$
the dominant-subspace error is several orders of magnitude larger,
explaining the deterioration of the practical sLMP with $\theta=1$
observed in Figure~\ref{fig:LMPvsExactLMP}.

These experiments reveal a marked asymmetry between the two subspaces. Small cluster points are benign in the complement subspace but can strongly amplify rounding errors in the dominant subspace. In contrast, cluster points near \(\lambda_k\) give small errors in both. Thus, the dominant subspace imposes the more restrictive stability requirement, while choices of \(\theta\) near \(\lambda_k\) are compatible with both rounding-error stability and the exact-arithmetic convergence interval $[\lambda_{k+1},\lambda_k]$.
\subsection{Synthetic perturbations}
\label{Result sec: Synthetic perturbation}
We investigate the sensitivity of sLMP to inexact spectral information using
synthetic perturbations, in which case the magnitude and direction of the
eigenvector errors can be controlled independently. Three cases are considered:
large unrestricted perturbations, large perturbations confined to the dominant
subspace, and small unrestricted perturbations. These experiments isolate the
effects of perturbation direction and magnitude.
\subsubsection{Large perturbations with unrestricted directions}
Synthetic perturbations are generated with linearly distributed magnitudes
$\tau_i\in[10^{-6},1]$ and unrestricted directions, allowing both mixing within
the dominant subspace and leakage into its complement.

Figure~\ref{fig:Exp1Conv} shows a clear difference from exact-arithmetic
convergence predictions. The error-informed cluster points
$\theta_{WM,e}=\lambda_{71}$ and $\theta_{WM,1/\lambda}=\lambda_{93}$ provide
the fastest convergence, with $\theta_{WM,e}$ consistently giving the smallest
relative error. In contrast, $\lambda_{k+1}$, $\theta_r$, and $\theta_m$
deteriorate significantly and initially increase the relative error.
Although $\lambda_k$ is not optimal, it performs considerably better than
these convergence-oriented choices and provides a useful compromise in this
regime.

Figures~\ref{fig:Exp1Diag} and~\ref{fig:Exp1Indiv} characterize the
perturbations. The heat maps show substantial mixing within the dominant
subspace and pronounced leakage into its complement, with
$\|\widehat V_k^TQ_{k+1:n}\|=7.192\times10^{-1}$, corresponding to a largest
principal angle of approximately $45.99^\circ$. Moreover,
$\eta_i\approx\tau_i$ for most dominant eigenvectors, while $\delta_i$ is
substantially smaller, indicating that most of the perturbation energy lies
in the complement subspace. Thus, substantial leakage coincides with the
regime in which the error-analysis cluster points outperform those motivated
by exact-arithmetic convergence theory.

These results indicate that substantial leakage into the complement subspace
strongly alters the behaviour of sLMP. In this regime, the cluster points
obtained from error analysis outperform those motivated solely by
exact-arithmetic convergence theory. This observation motivates the next
experiment, in which the perturbation magnitudes are kept unchanged while the
perturbations are confined entirely to the dominant subspace.

\begin{figure}[t]
    \centering

    \begin{subfigure}[t]{0.45\textwidth}
        \centering
    \includegraphics[width=\linewidth]{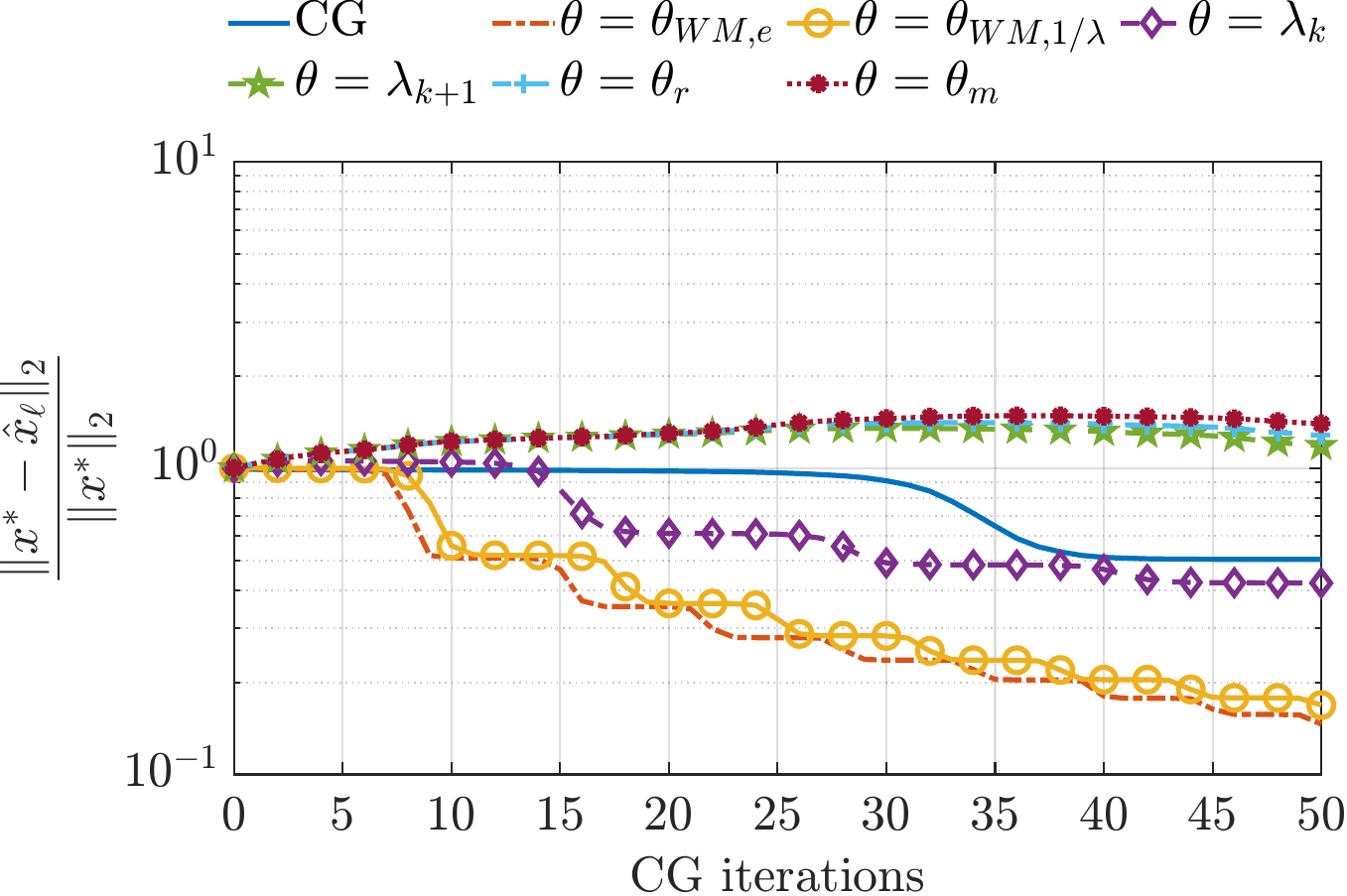}
        \caption{Relative forward-error convergence.}
        \label{fig:Exp1Conv}
    \end{subfigure}
    \begin{subfigure}[t]{0.45\textwidth}
        \centering
    \includegraphics[width=\linewidth]{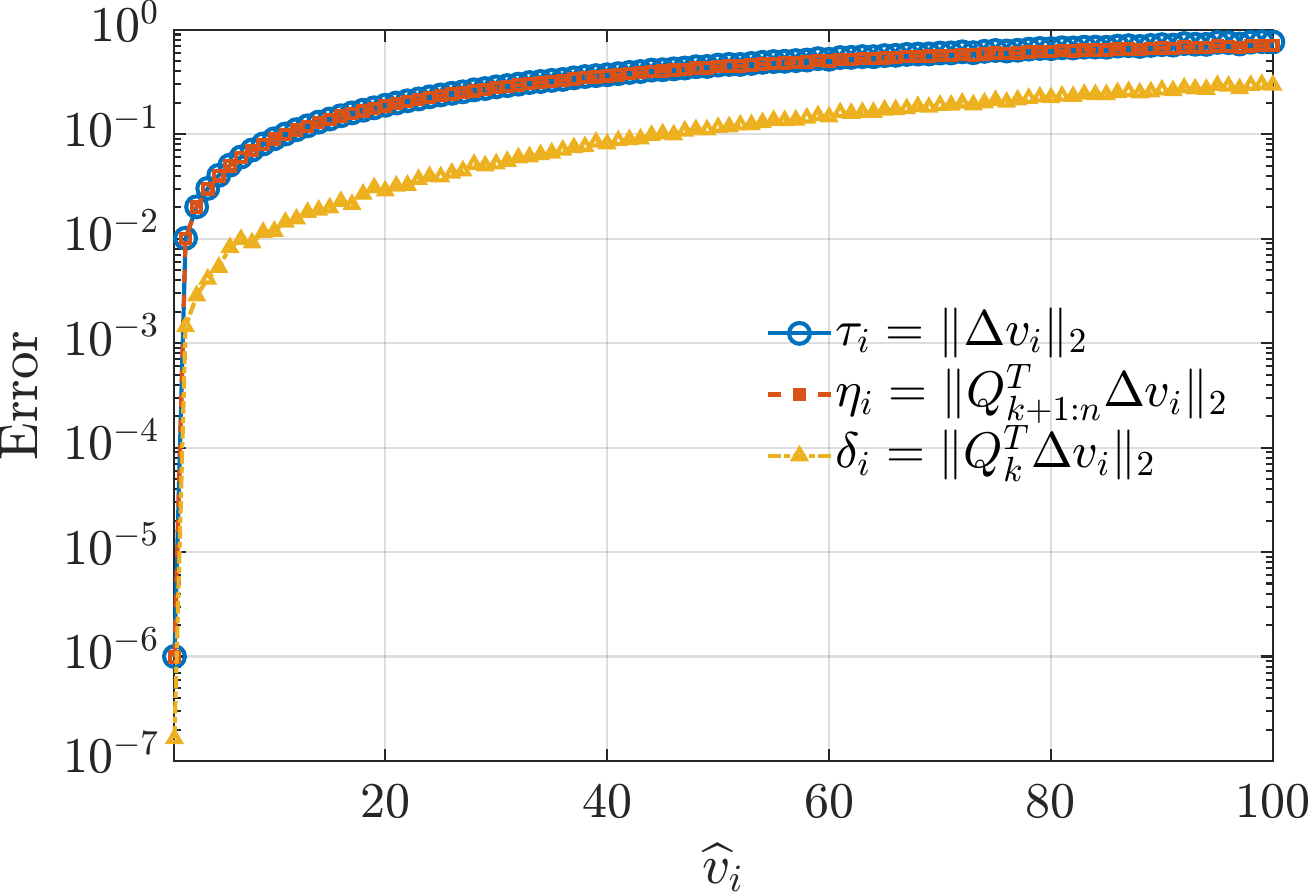}
        \caption{Individual perturbation components.}
        \label{fig:Exp1Indiv}
    \end{subfigure}

    \vspace{0.3em}

    \begin{subfigure}[t]{\textwidth}
        \centering
        \includegraphics[width=0.7\textwidth]{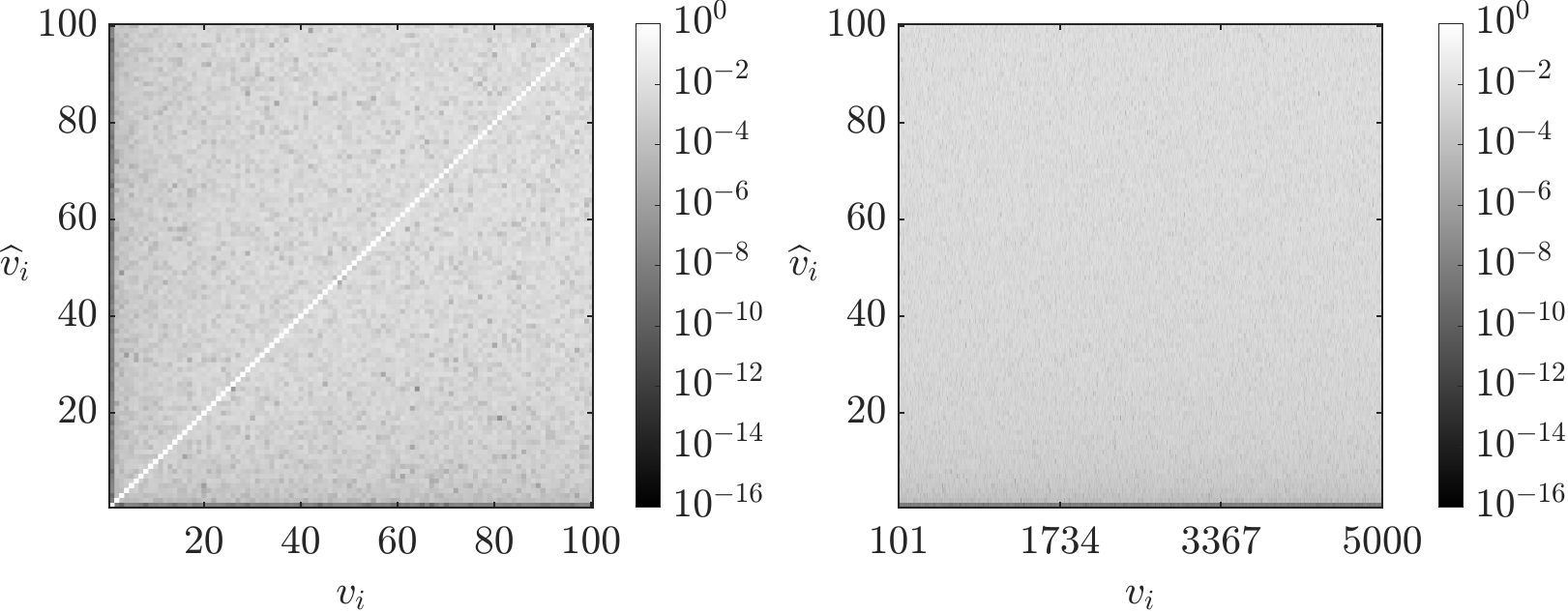}
        \caption{Perturbation diagnostics.}
        \label{fig:Exp1Diag}
    \end{subfigure}

    \caption{Results for unrestricted synthetic perturbations with
linearly distributed perturbation magnitudes
$\tau_i \in [10^{-6},1]$.
Panel (a) shows the convergence for the different cluster points;
panel (b) shows the individual perturbation measures
$\tau_i$, $\eta_i$, and $\delta_i$; and panel (c) shows the heat maps
$|\widehat{V}_k^T Q_k|$ and
$|\widehat{V}_k^T Q_{k+1:n}|$, illustrating mixing within the
dominant subspace and leakage into the complement subspace,
respectively.}
    \label{fig:Exp1}
\end{figure}

\subsubsection{Large perturbations confined to the dominant subspace}

The perturbation magnitudes are kept identical to the previous experiment,
$\tau_i\in[10^{-6},1]$, but are now confined entirely to the dominant subspace,
eliminating leakage while preserving dominant-subspace mixing.
\begin{figure}[t]
    \centering

    \begin{subfigure}[t]{0.45\textwidth}
        \centering
        \includegraphics[width=\textwidth]{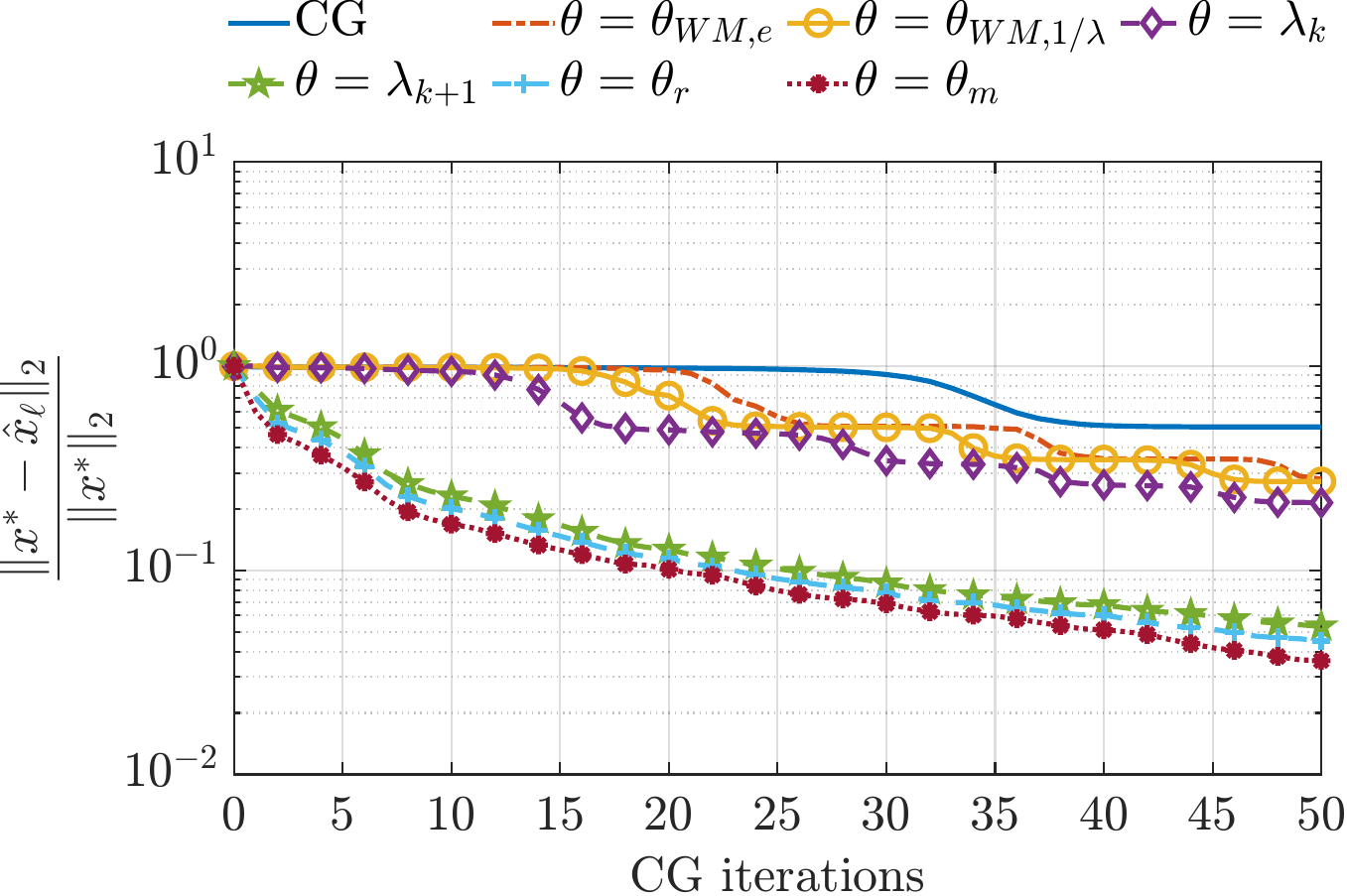}
        \caption{Relative forward-error plots.}
        \label{fig:Exp2Conv}
    \end{subfigure}
    \begin{subfigure}[t]{0.45\textwidth}
        \centering
    \includegraphics[width=\textwidth]{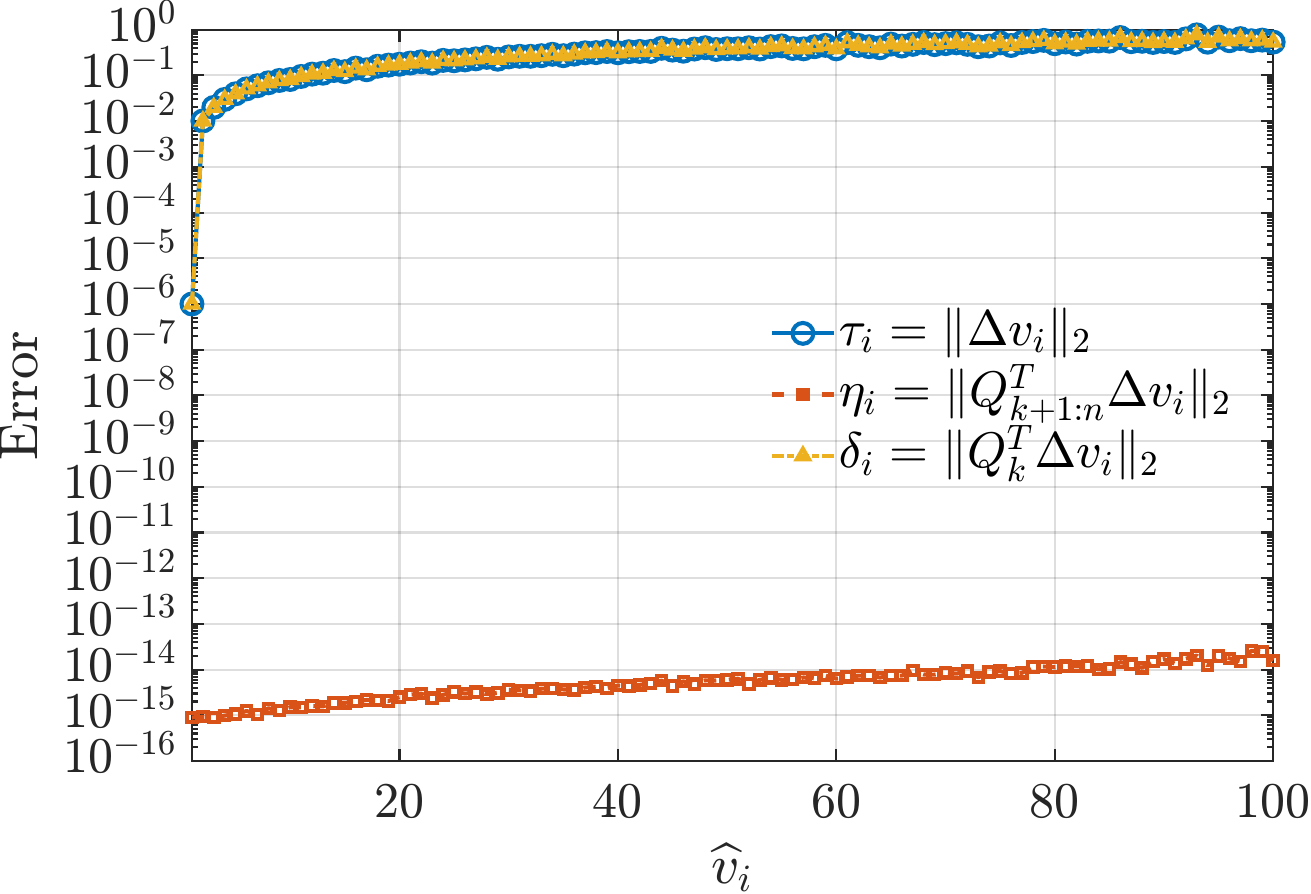}
        \caption{Individual eigenvector perturbation components.}
        \label{fig:Exp2Indiv}
    \end{subfigure}
    
    \vspace{0.3em}

    \begin{subfigure}[t]{\textwidth}
        \centering
        \includegraphics[width=0.7\textwidth]{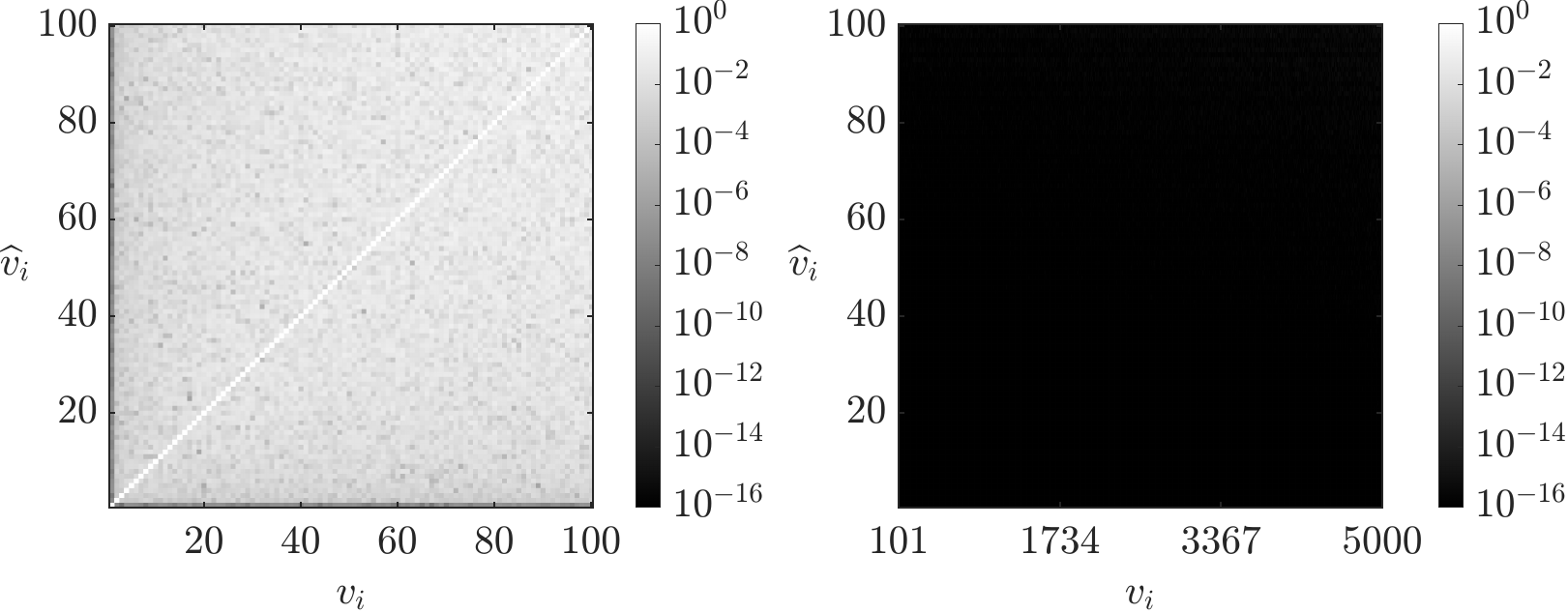}
        \caption{Perturbation diagnostics.}
        \label{fig:Exp2Diag}
    \end{subfigure}

    \caption{Results for synthetic perturbations confined to the dominant subspace with linearly distributed perturbation magnitudes
$\tau_i\in[10^{-6},1]$.
The panels are as described in
Figure~\ref{fig:Exp1}}
    \label{fig:Exp2}
\end{figure}

The convergence plots in Figure~\ref{fig:Exp2Conv} show a marked reversal
from the previous experiment. The exact-arithmetic choices now converge
fastest, with $\theta_m$ giving the smallest relative error, followed closely
by $\theta_r$ and $\lambda_{k+1}$. This is consistent with the analysis in~\cite{diouane2024efficient} for
$[\lambda_{k+1},\lambda_k]$. In contrast, $\theta_{WM,e}$ and
$\theta_{WM,1/\lambda}$ are less effective, while $\lambda_k$ lies between
the convergence-oriented and error analysis choices.

The diagnostics explain this reversal. Figure~\ref{fig:Exp2Diag} shows no
leakage beyond roundoff,
\[
\left\|\widehat V_k^TQ_{k+1:n}\right\|
=3.283\times10^{-14},
\]
while Figure~\ref{fig:Exp2Indiv} gives $\eta_i\approx0$ and
$\delta_i\approx\tau_i$. Thus, despite substantial perturbations of the individual eigenvectors, the dominant subspace itself is preserved up to roundoff. Neglecting this roundoff-level leakage, there exists an orthogonal matrix
$R\in\mathbb{R}^{k\times k}$ such that $\widehat V_k=Q_kR$, and
\[
H_kA
=
\theta Q_kRD_k^{-1}R^TD_kQ_k^T
+
Q_{k+1:n}D_{k+1:n}Q_{k+1:n}^T,
\]
where $D_k=\operatorname{diag}(\lambda_1,\ldots,\lambda_k)$. Hence, the
complement eigenvalues remain exactly $\lambda_{k+1},\ldots,\lambda_n$.

Let $M=RD_k^{-1}R^TD_k$. It is similar to the symmetric positive definite
matrix
$\widetilde M=D_k^{1/2}RD_k^{-1}R^TD_k^{1/2}$, and hence its eigenvalues
$\mu_1,\ldots,\mu_k$ are real and positive. Moreover,
$\det(M)=1$, so $\prod_{i=1}^k\mu_i=1$. The dominant eigenvalues of the
preconditioned matrix are therefore $\theta\mu_1,\ldots,\theta\mu_k$, with
geometric mean
\[
\left(\prod_{i=1}^{k}\theta\mu_i\right)^{1/k}=\theta.
\]
Thus, the complement spectrum is preserved while the clustered dominant
spectrum scales linearly with $\theta$. In particular,
$\lambda_{\max}(\theta M)=\theta\lambda_{\max}(M)$. Since for this experiment
$\theta_m<\theta_r<\lambda_{k+1}$,
\[
\lambda_{\max}(\theta_mM)
<
\lambda_{\max}(\theta_rM)
<
\lambda_{\max}(\lambda_{k+1}M),
\]
consistent with Figure~\ref{fig:Exp2}(a), where $\theta_m$ converges fastest,
followed by $\theta_r$ and $\lambda_{k+1}$. Comparing
Figures~\ref{fig:Exp1} and~\ref{fig:Exp2} therefore indicates that the
deterioration in the previous experiment is caused primarily by leakage into
the complement subspace rather than by dominant-subspace mixing alone.

\subsubsection{Small perturbations with unrestricted directions}

We return to unrestricted perturbations, but reduce their magnitudes to
$\tau_i\in[10^{-12},10^{-6}]$. This tests whether sufficiently accurate
spectral information restores the effectiveness of the convergence-oriented
cluster points despite the presence of leakage.

\begin{figure}[t]
    \centering
    \begin{subfigure}[t]{0.45\textwidth}
        \centering
        \includegraphics[width=\textwidth]{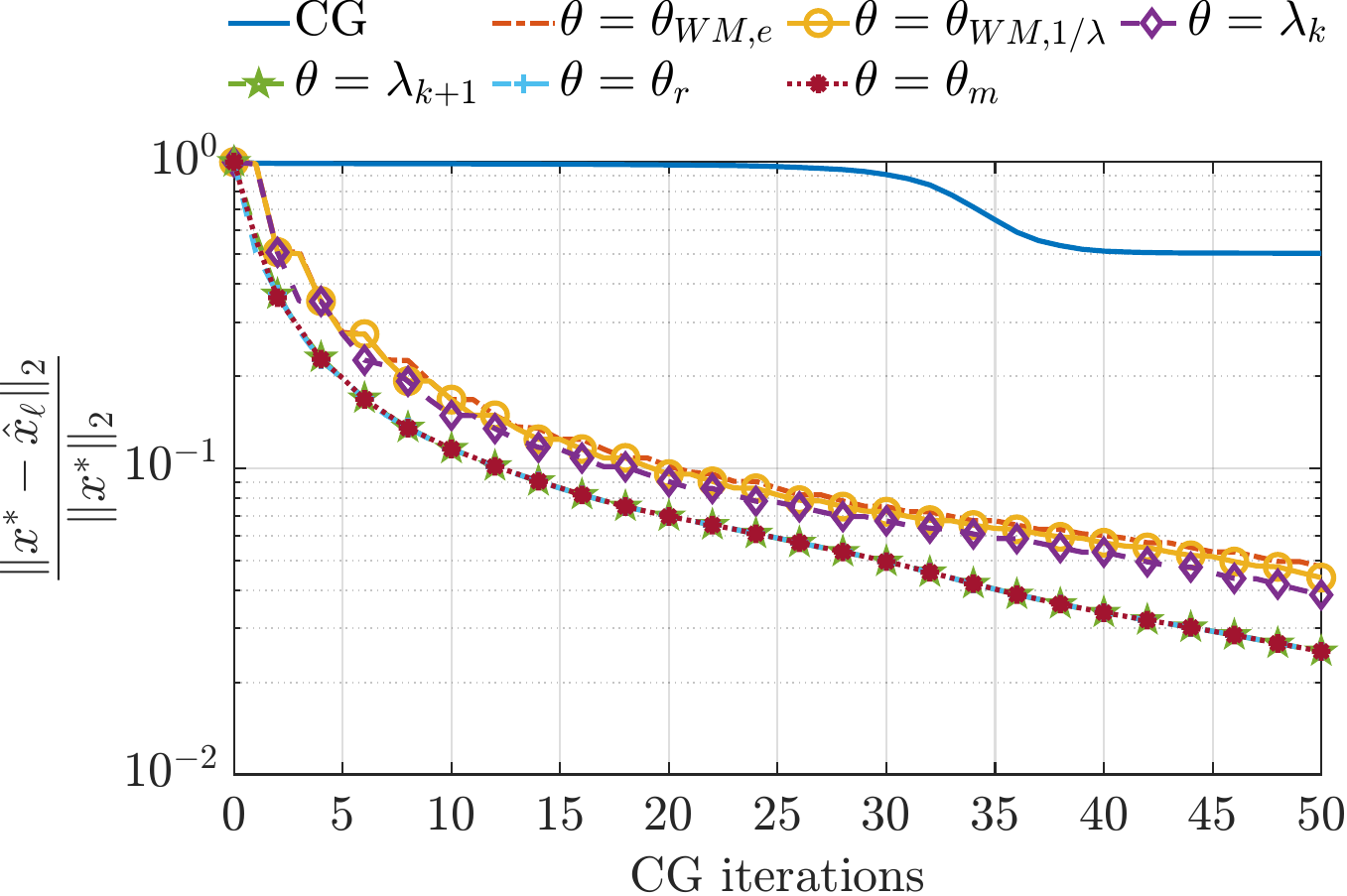}
        \caption{Relative forward-error convergence.}
    \end{subfigure}
     \begin{subfigure}[t]{0.45\textwidth}
        \centering
    \includegraphics[width=\textwidth]{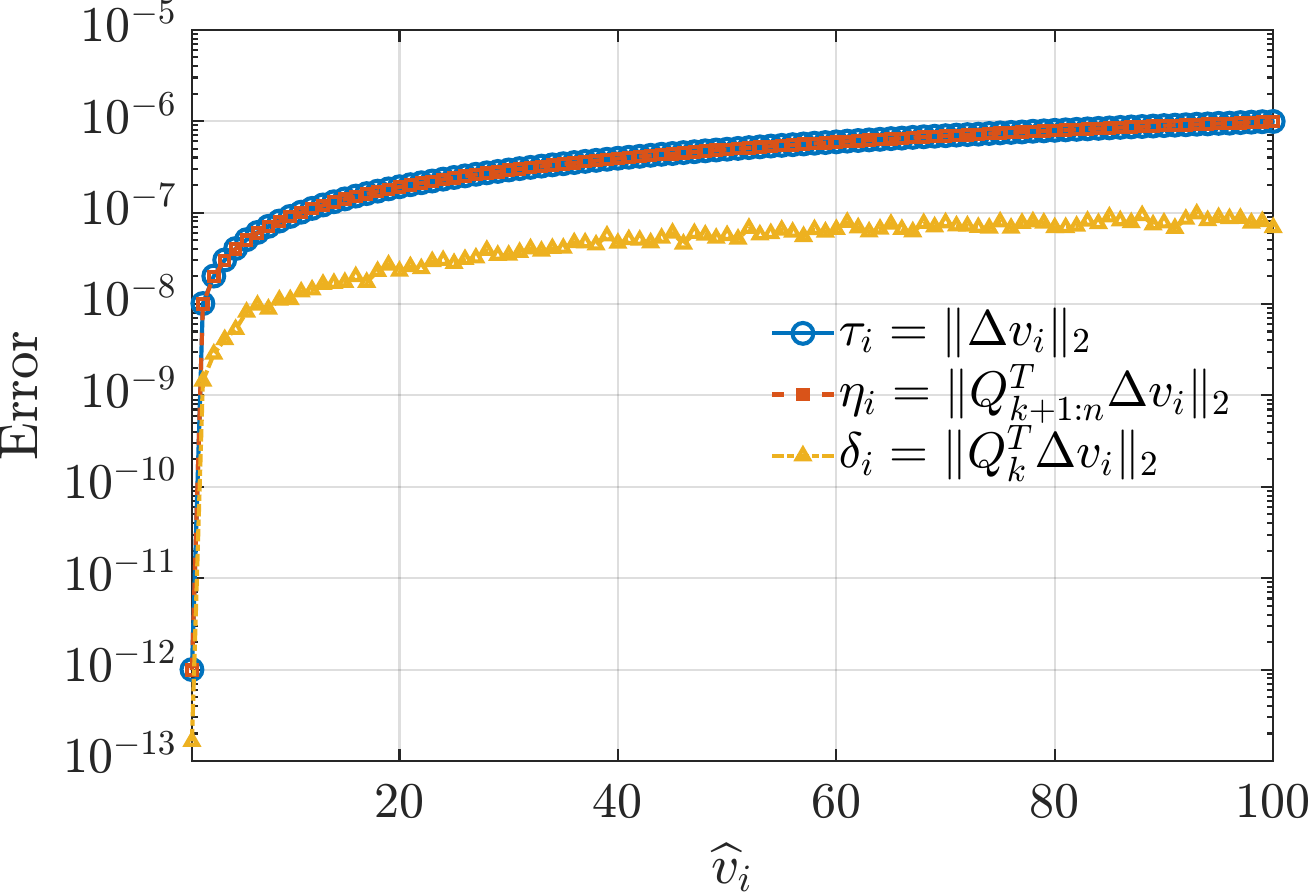}
        \caption{Individual eigenvector perturbation components.}
    \end{subfigure}
    
    \vspace{0.5em}

    \begin{subfigure}[t]{\textwidth}
        \centering
        \includegraphics[width=0.7\textwidth]{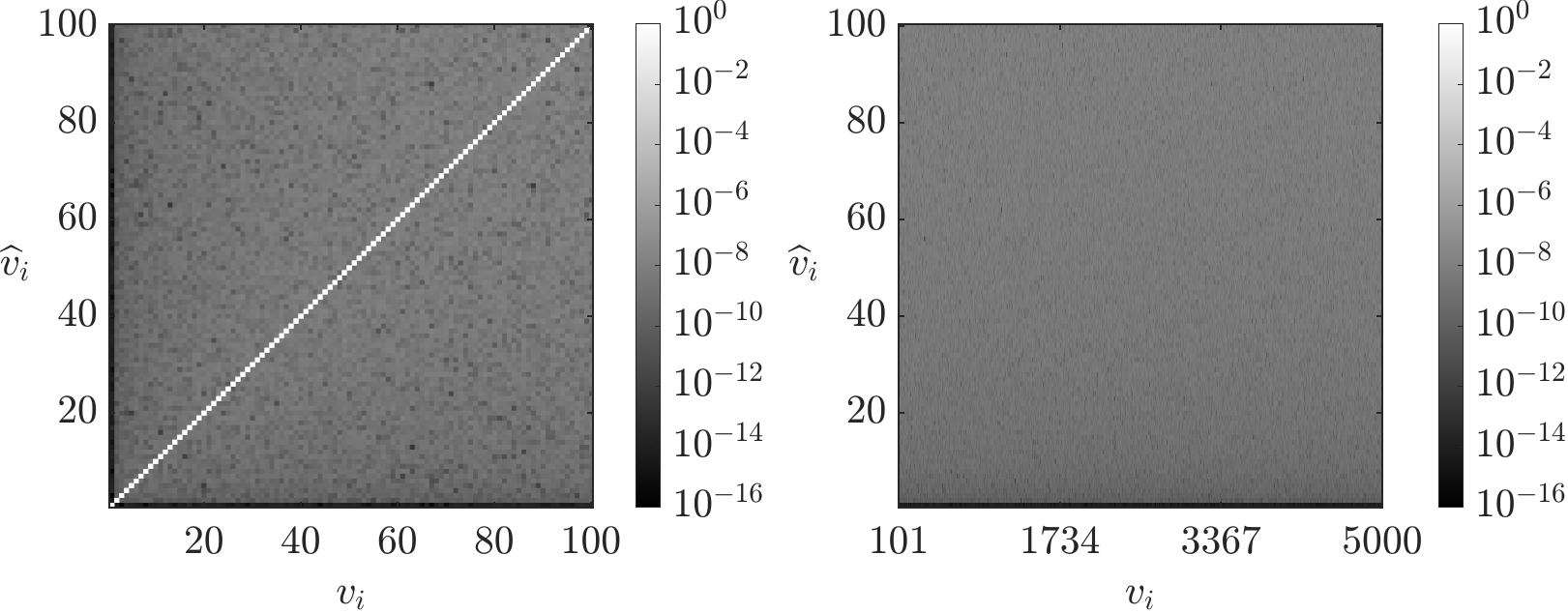}
        \caption{Perturbation diagnostics.}
    \end{subfigure}
    
   \caption{Results for unrestricted synthetic perturbations with logarithmically
distributed perturbation magnitudes
$\tau_i\in[10^{-12},10^{-6}]$.
The panels are as described in
Figure~\ref{fig:Exp1}}
    \label{fig:Exp3}
\end{figure}

Figure~\ref{fig:Exp3}(a) shows that reducing the perturbation magnitude
almost completely restores the exact-arithmetic convergence behaviour:
the convergence-oriented cluster points are again fastest, while
$\theta_{WM,e}$ and $\theta_{WM,1/\lambda}$ no longer have the advantage
observed in the first experiment.

The diagnostics in Figure~\ref{fig:Exp3}(b) show that leakage remains present
but is several orders of magnitude smaller, with $\left\|\widehat V_k^TQ_{k+1:n}\right\|
=1.003\times10^{-6}.$
Both $\delta_i$ and $\eta_i$ decrease with the perturbation magnitude, so the
remaining leakage is insufficient to significantly modify the dominant
subspace.

Comparing Figures~\ref{fig:Exp1}, \ref{fig:Exp2}, and \ref{fig:Exp3} shows
that both the direction and magnitude of the perturbations govern the
practical behaviour of sLMP. Large leakage favours the error analysis
cluster points, whereas eliminating or sufficiently reducing leakage restores
the superiority of the convergence-oriented choices. Thus, both perturbation
direction and magnitude determine whether exact-arithmetic convergence theory
remains predictive in finite precision.

\subsection{Practical eigensolvers}
\label{sec:practical-eigensolvers}
We now investigate whether the mechanisms observed in
Subsection~\ref{Result sec: Synthetic perturbation} also arise when the
dominant spectral information is obtained from practical eigensolvers.
Both the approximate dominant eigenvectors and eigenvalues are used to
construct sLMP, while the exact quantities $\lambda_{k+1}$,
$\theta_m$, and $\theta_r$ are retained as reference cluster points for
comparison with exact-arithmetic convergence predictions.

\subsubsection{Randomized Nystr\"om Algorithm}
In this experiment, we construct sLMP using approximate dominant
eigenpairs computed by the randomized Nyström algorithm~\cite{halko2011finding},
following Algorithm~4 in~\cite{dauvzickaite2021randomised}.
\begin{figure}[t]
    \centering

    \begin{subfigure}[t]{0.45\textwidth}
        \centering
       \includegraphics[width=\textwidth]
        {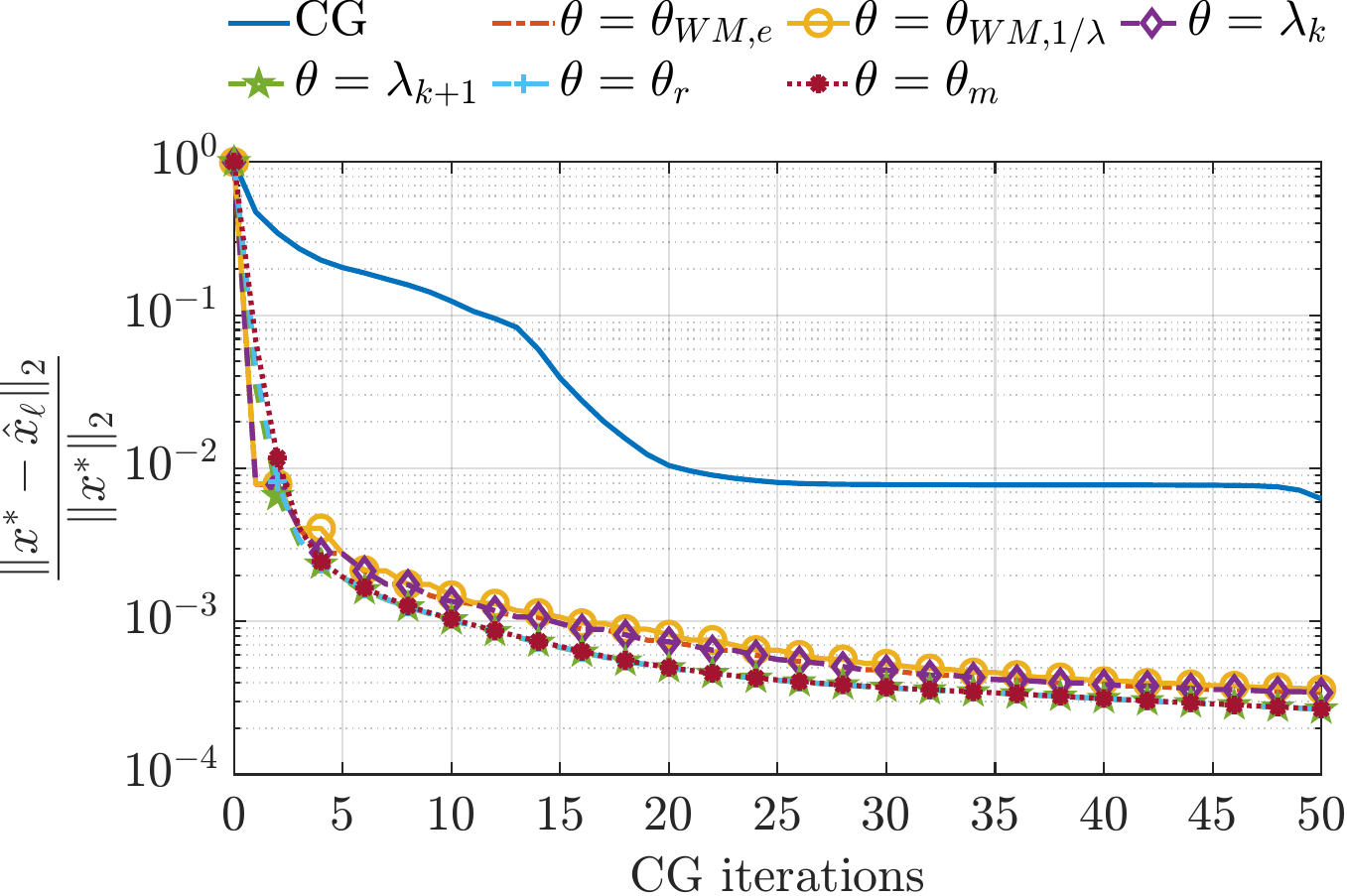}
        \caption{Relative forward-error plots}
        \label{fig:NystromConv}
    \end{subfigure}
       \begin{subfigure}[t]{0.45\textwidth}
        \centering
    \includegraphics[width=\textwidth]
        {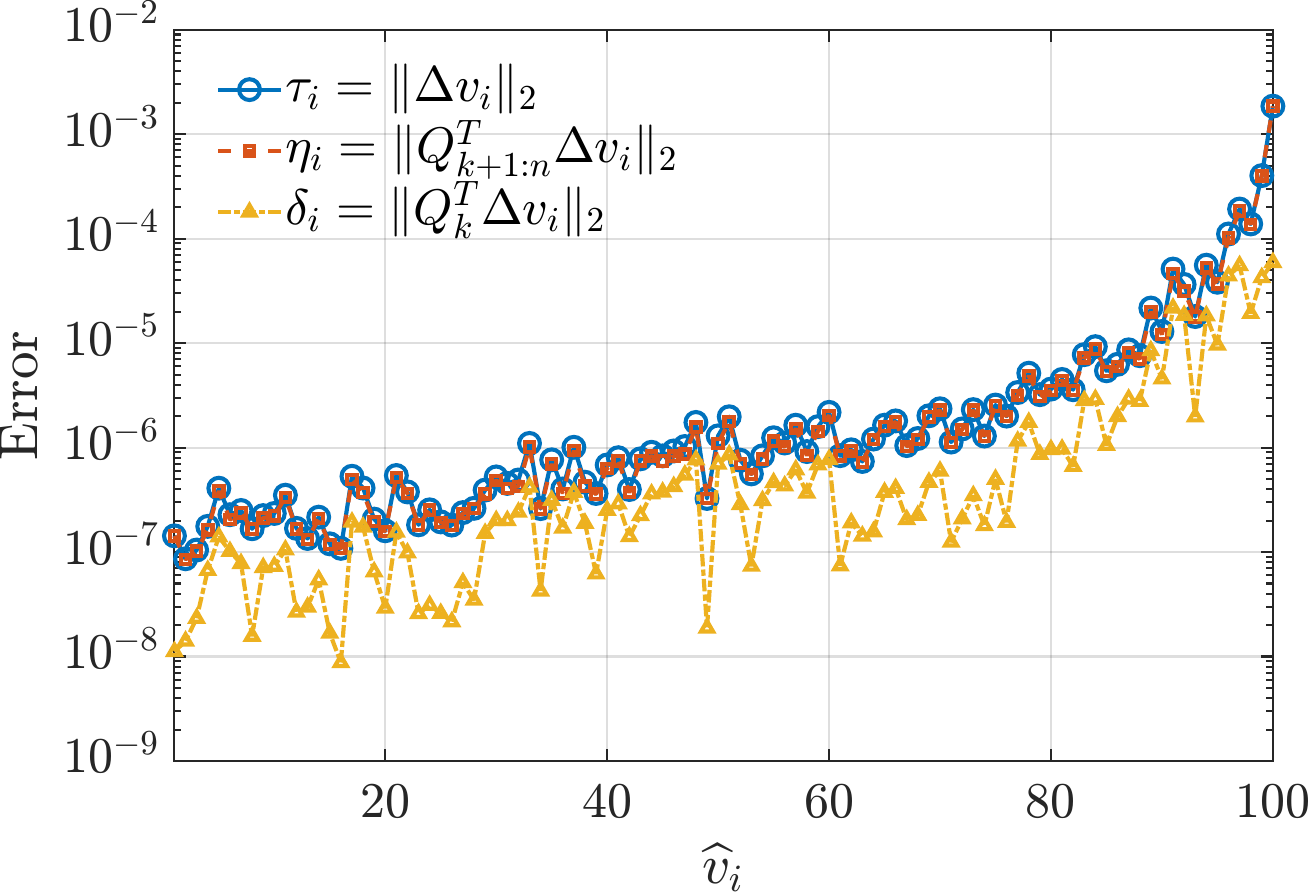}
        \caption{Individual eigenvector perturbation components}
        \label{fig:NystromIndiv}
    \end{subfigure}
    \vspace{0.5em}

    \begin{subfigure}[t]{\textwidth}
        \centering
        \includegraphics[width=0.7\textwidth]
        {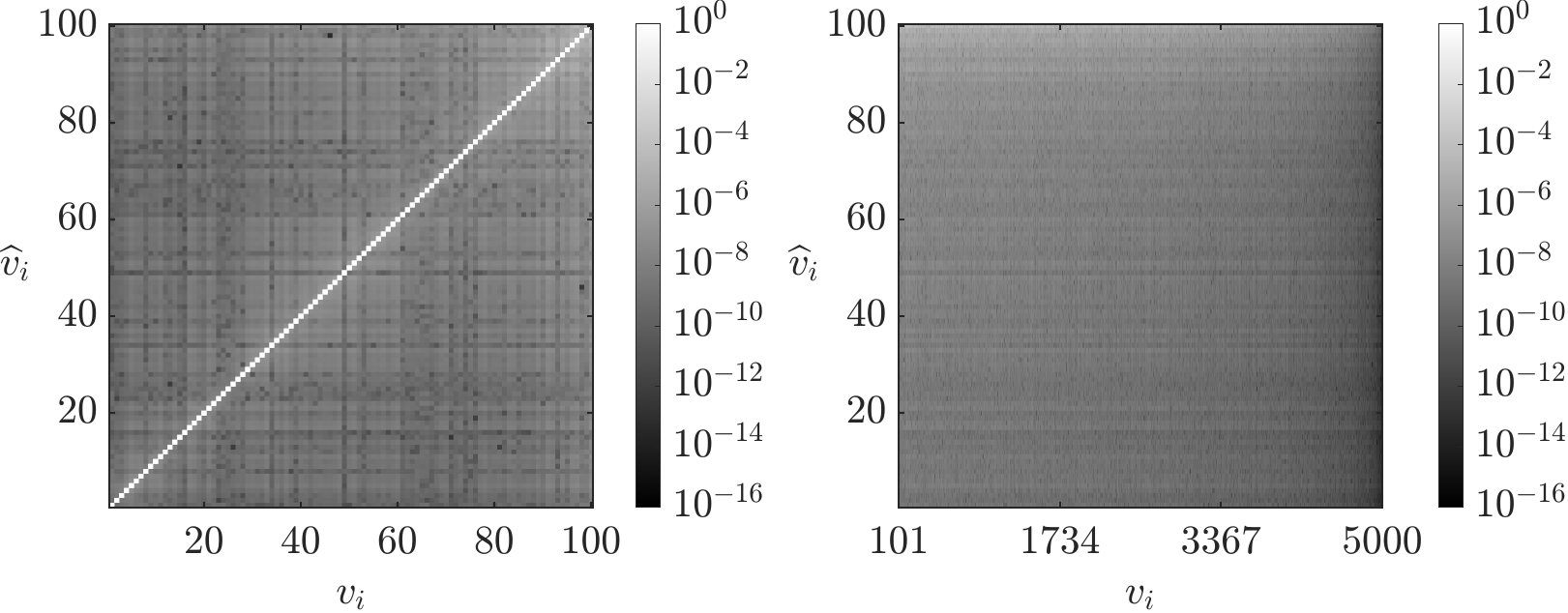}
        \caption{Perturbation diagnostics}
        \label{fig:NystromDiag}
    \end{subfigure}

   \caption{Results obtained using approximate dominant eigenpairs computed
by the randomized Nystr\"om algorithm. The panels are as described in
Figure~\ref{fig:Exp1}}
    \label{fig:Nystrom}
\end{figure}
Figure~\ref{fig:NystromConv} shows that the convergence-oriented cluster
points remain the most effective: $\theta_m$, $\theta_r$, and
$\lambda_{k+1}$ give the fastest and nearly coincident convergence.
In contrast, $\theta_{WM,1/\lambda}$ and $\widehat\lambda_k$ converge
more slowly, with the latter coinciding with the weighted-median curve
because $\theta_{WM,e}=\widehat\lambda_k$ in this experiment.

The diagnostics in Figures~\ref{fig:NystromDiag}
and~\ref{fig:NystromIndiv} show that most computed eigenvectors remain
accurately aligned with their exact counterparts, while both mixing and
leakage increase near the end of the dominant spectrum. The global
leakage measure is
$\left\|\widehat V_k^TQ_{k+1:n}\right\|=1.863\times10^{-3}$,
but the leakage is strongly localized, with the final approximate
eigenvector providing the dominant contribution. In particular,
$\eta_k$ accounts for most of its perturbation, while $\delta_k$ is
smaller. Consequently, the perturbation weight associated with
$\widehat v_k$ dominates the weighted-median criterion, giving
$\theta_{WM,e}=\widehat\lambda_k$.

This choice has a useful interpretation. Since
$\widehat\alpha_i=1-\frac{\theta}{\widehat\lambda_i}$, setting
$\theta=\widehat\lambda_k$ gives $\widehat\alpha_k=0$, so that
$I-\widehat\alpha_k\widehat v_k\widehat v_k^T=I$ and the least accurate
eigenvector does not contribute to sLMP. Thus, the weighted-median
criterion automatically suppresses the dominant eigenvector carrying
the largest perturbation. Nevertheless, since the remaining computed
eigenvectors are highly accurate, the overall perturbation remains
limited and the convergence-oriented choices retain their advantage.
This is consistent with the synthetic experiments: leakage can alter
the preferred cluster point when it is sufficiently large and
distributed across the approximate dominant basis, whereas an isolated
large error can be mitigated by removing the corresponding inaccurate
vector from the preconditioner.

\subsubsection{REVD-ritzit}

We next consider the REVD-ritzit method proposed
in~\cite{dauvzickaite2021randomised}. It approximates the dominant
eigenpairs using a single, parallelizable matrix-matrix product and was
found to provide the best overall preconditioning performance among the
randomized methods considered in~\cite{dauvzickaite2021randomised}.

\begin{figure}[t]
    \centering

    \begin{subfigure}[t]{0.45\textwidth}
        \centering
        \includegraphics[width=\textwidth]{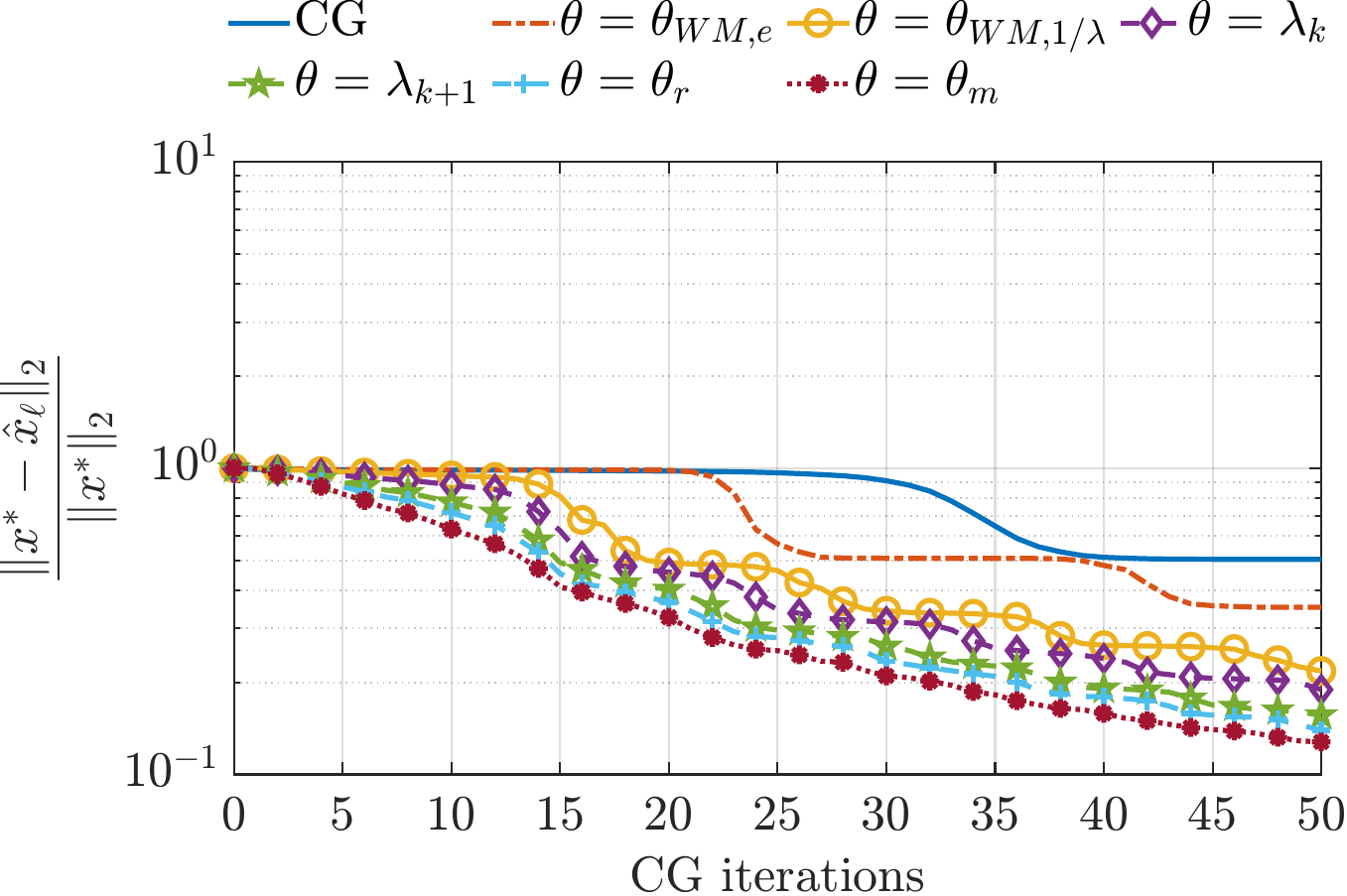}
        \caption{Relative forward-error plots}
        \label{fig:RitzitConv}
    \end{subfigure}
    \begin{subfigure}[t]{0.45\textwidth}
        \centering
        \includegraphics[width=\textwidth]{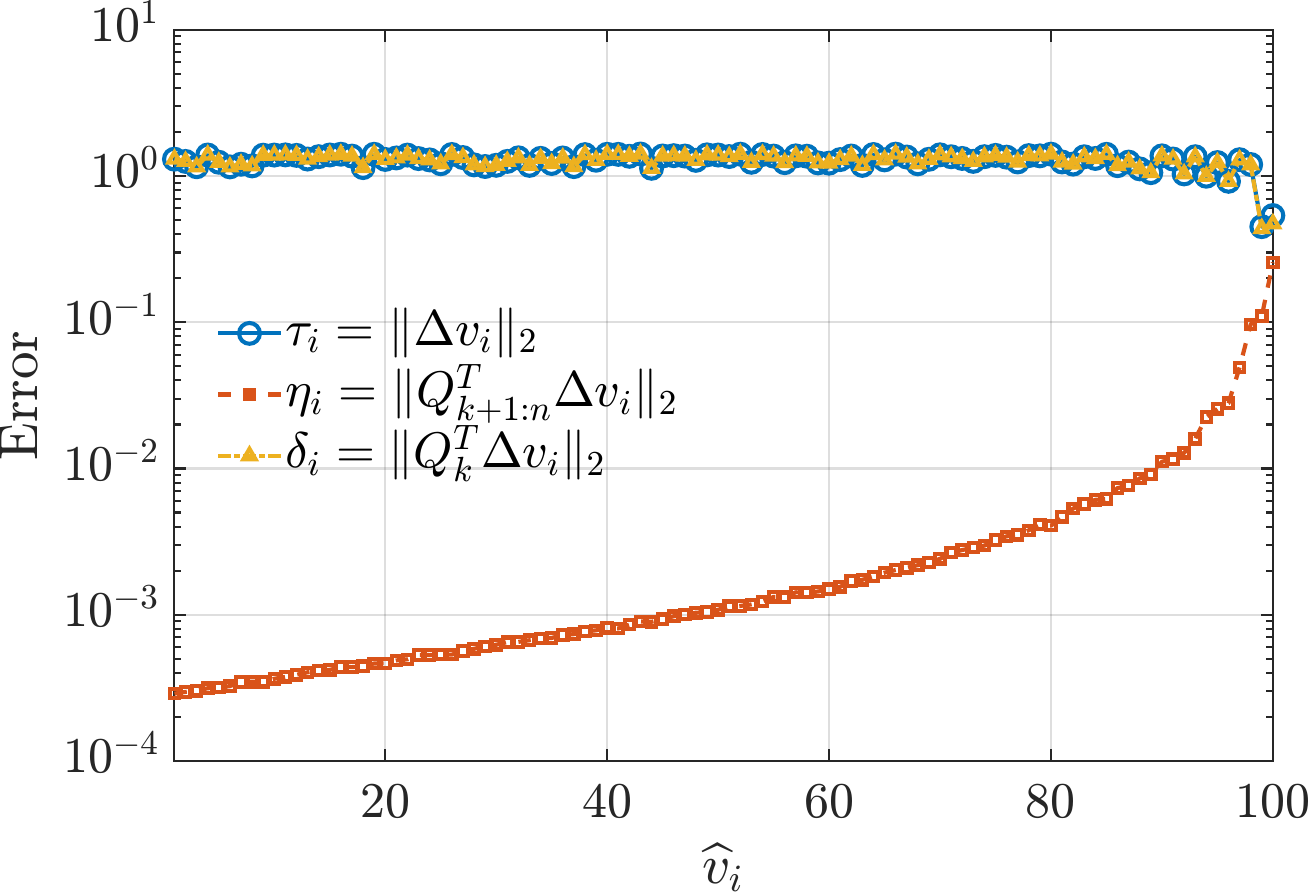}
        \caption{Perturbation diagnostics}
        \label{fig:RitzitIndiv}
    \end{subfigure}
    \vspace{0.5em}

    \begin{subfigure}[t]{\textwidth}
        \centering
        \includegraphics[width=0.7\textwidth]{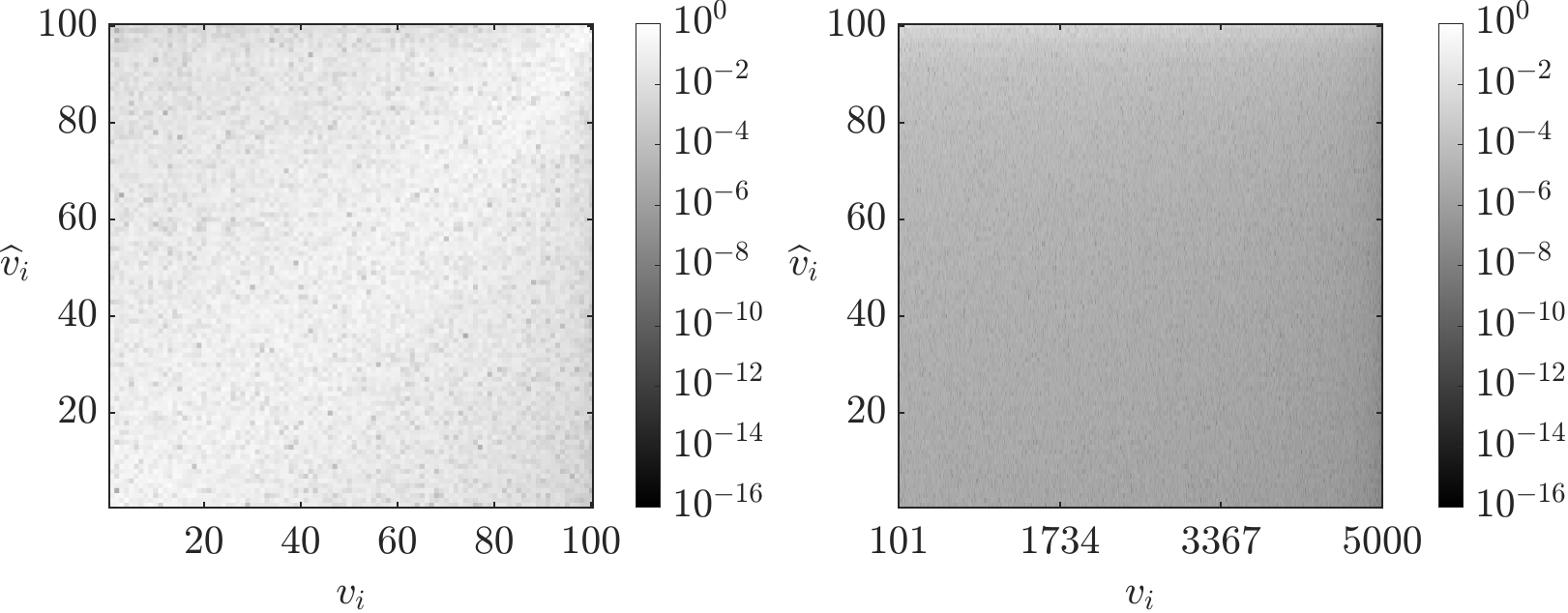}
        \caption{Perturbation diagnostics}
        \label{fig:RitzitDiag}
    \end{subfigure}
    \caption{Results obtained using approximate dominant eigenpairs computed
by the randomized REVD-ritzit eigensolver. The panels are as described in
Figure~\ref{fig:Exp1}}
    \label{fig:Ritzit}
\end{figure}

Figure~\ref{fig:RitzitConv} shows that the convergence-oriented cluster
points again perform best. The choices $\theta_m$, $\theta_r$, and
$\lambda_{k+1}$ yield the smallest relative errors, whereas
$\theta_{WM,1/\lambda}$, $\theta_{WM,e}$, and $\widehat\lambda_k$
converge more slowly. This resembles the second synthetic perturbation
experiment, where the perturbations were confined to the dominant
subspace.

The diagnostics in Figures~\ref{fig:RitzitIndiv},~\ref{fig:RitzitDiag} show substantial mixing
among the dominant eigenvectors but comparatively little leakage into
the complement subspace. In particular, $\tau_i\approx\delta_i$ for
almost all dominant eigenvectors, whereas $\eta_i$ remains several
orders of magnitude smaller throughout most of the dominant spectrum
and increases only for the final few eigenvectors. Thus, despite large
individual eigenvector perturbations, the perturbation is almost
entirely confined to the dominant subspace.

Since the dominant subspace is largely preserved, the complement
spectrum is only weakly affected and the convergence-oriented cluster
points recover their expected superiority, consistent with the second
synthetic perturbation experiment. Comparing Figures~\ref{fig:Nystrom}
and~\ref{fig:Ritzit} shows that the two eigensolvers produce different
perturbation structures: Nystr\"om concentrates most of the error in the
final dominant eigenvectors, whereas REVD-ritzit distributes it more
uniformly while largely preserving the dominant subspace. Nevertheless,
both experiments support the synthetic perturbation results: despite
the additional eigenvalue inaccuracies, preservation of the dominant
subspace remains the primary factor determining whether the
convergence-oriented cluster points remain effective.

\section{Conclusion}
\label{sec: conclusion}
In this work, we have investigated the choice of cluster point in sLMP
from a finite-precision perspective. Our analysis shows that choices favourable
for exact-arithmetic convergence need not provide the best practical behaviour
when rounding errors and inaccuracies in the spectral information are taken
into account. For rounding errors arising during the application of sLMP,
we derived relative for-ward-error bounds in the dominant and complement
subspaces. In particular, the dominant-subspace analysis identifies the
weighted median $\theta_{WM,1/\lambda}$ as a finite-preci-sion choice and
explains the poor practical behaviour of the standard choice $\theta=1$.

For inaccuracies in the spectral information, we derived a perturbation bound
accounting for errors in both the dominant eigenvectors and eigenvalues. Its
minimizer $\theta_{WM,e}$ is a weighted median of the perturbed dominant
eigenvalues, with weights determined by the eigenvector perturbation
magnitudes, and reduces to a weighted median of the exact dominant eigenvalues
when the eigenvalues are exact. Since these perturbation magnitudes are
generally unavailable in practice, $\theta_{WM,e}$ primarily serves as a
theoretically optimal reference.

The numerical experiments show that the effect of inaccurate spectral
information depends on both the magnitude and direction of the eigenvector
errors. Significant leakage into the complement subspace can favour
$\theta_{WM,1/\lambda}$ and $\theta_{WM,e}$ over exact-arithmetic
convergence choices, whereas preserving the dominant invariant subspace
restores the advantage of the convergence-oriented choices. Practical
eigensolvers exhibit the same qualitative behaviour, with eigenvalue
inaccuracies having a comparatively small additional effect in the cases
considered. The experiments also indicate that $\lambda_k$, while not
generally optimal, can provide a useful compromise when other
convergence-oriented choices deteriorate.

Overall, the choice of cluster point reflects a balance between convergence
and numerical stability, governed by the quality and structure of the
available spectral information. In particular, preservation of the dominant
subspace provides an important indicator of when exact-arithmetic convergence
theory remains predictive of the practical behaviour of sLMP. Future work could investigate sLMP in lower-precision arithmetic, where rounding-error effects may play an even greater role in the choice of cluster point.
\bibliographystyle{siamplain}
\bibliography{references}
\end{document}

%% file: ex_shared.tex
\usepackage{lipsum}
\usepackage{amsfonts}
\usepackage{graphicx}
\usepackage{epstopdf}
\usepackage{algorithmic}
\usepackage{subcaption}
\ifpdf
  \DeclareGraphicsExtensions{.eps,.pdf,.png,.jpg}
\else
  \DeclareGraphicsExtensions{.eps}
\fi

\newsiamremark{remark}{Remark}
\newsiamremark{hypothesis}{Hypothesis}
\crefname{hypothesis}{Hypothesis}{Hypotheses}
\newsiamthm{claim}{Claim}
\newsiamremark{fact}{Fact}
\crefname{fact}{Fact}{Facts}

\headers{Cluster Point Effects on Rounding and Sensitivity}{H. Elzayyadi and J. M. Tabeart}

\title{The Influence of the Cluster Point on Rounding Errors and Sensitivity in the Spectral Limited-Memory Preconditioner\thanks{Submitted to the editors September 22, 2026.
\funding{This work was funded by the Irène Curie fellowship of Eindhoven University of Technology}}}

\author{
Hisham Elzayyadi\thanks{
Department of Mathematics and Computer Science, Eindhoven University of Technology
(\email{h.g.m.h.elzayyadi@tue.nl}).
}
\and
Jemima M. Tabeart\thanks{
Department of Mathematics and Computer Science, Eindhoven University of Technology
(\email{j.m.tabeart@tue.nl}).
}
}

\usepackage{amsopn}
